\documentclass[11pt]{article}

\usepackage[dvipsnames,table]{xcolor}
\definecolor{c0}{HTML}{ffb000}
\definecolor{c1}{HTML}{fe6100}
\definecolor{c2}{HTML}{dc267f}
\definecolor{c3}{HTML}{785ef0}
\definecolor{c4}{HTML}{648fff}

\usepackage[margin=1.15in]{geometry}

\usepackage{amsmath,amssymb,amsthm,mathtools}
\usepackage{hyperref}
\hypersetup{
    colorlinks=true,      
    linkcolor=c2,       
    citecolor=c4,      
    filecolor=c4,    
    urlcolor=c4         
}
\usepackage[capitalize,nameinlink,noabbrev]{cleveref}
\crefformat{equation}{#2(#1)#3}
\crefmultiformat{equation}{#2(#1)#3}%
{ and~#2(#1)#3}{, #2(#1)#3}{ and~#2(#1)#3}

\usepackage[labelfont=bf,font=small]{caption}

\usepackage[backend=biber,backref,style=alphabetic,maxalphanames=4,maxcitenames=99,maxbibnames=99,giveninits=true,url=false,doi=false,isbn=false,eprint=false,date=year]{biblatex}
\DeclareSourcemap{
  \maps[datatype=bibtex, overwrite]{
    \map{
      \step[fieldset=editor, null]
    }
  }
}
\DefineBibliographyStrings{english}{%
  backrefpage = {cited on page},%
  backrefpages = {cited on pages}%
}

\renewcommand{\vec}[1]{\mathbf{#1}}
\newcommand{\T}{\mathsf{T}}
\newcommand{\R}{\mathbb{R}}
\newcommand{\C}{\mathbb{C}}
\newcommand{\Pol}{\Pi}
\newcommand{\Kry}{\mathcal{K}}

\newcommand{\lmin}{\lambda_{\min}}
\newcommand{\lmax}{\lambda_{\max}}
\newcommand{\dmu}{\,\mathrm{d}\mu(t)}

\newcommand{\ip}[2]{\langle #1, #2 \rangle}

\newcommand{\vvvert}{|\!|\!|}

\newtheorem{theorem}{Theorem}
\newtheorem*{theorem*}{Theorem}
\newtheorem{lemma}[theorem]{Lemma}

\theoremstyle{definition}
\newtheorem{remark}[theorem]{Remark}

\title{Optimal near-optimality bounds for the Lanczos method for matrix functions}
\author{Tyler Chen\thanks{NYU (\url{tyler.chen@nyu.edu})} \and David Persson\thanks{Flatiron Institute and NYU (\url{dup210@nyu.edu})}}
\date{}

\begin{document}
\maketitle

\begin{abstract}
Let $\vec{A}$ be Hermitian positive definite and let $\vec{f}_m$ denote the Lanczos approximation to $f(\vec{A})\vec{b}$.
We prove that if $f(z)$ or $f(z) / z$ is Stieltjes, then the $\vec{A}^\alpha$-norm error of the Lanczos approximation is within a factor $\tfrac{1}{2}(\kappa(\vec{A})^{E/2} + \kappa(\vec{A})^{-E/2})$ of the the best possible Krylov Subspace Method, where $\kappa(\vec{A})$ is the condition number of $\vec{A}$ and $E = \max\{\alpha,1-\alpha\}$. 
Our result strengthens and generalizes the upper bound of [Schweitzer; SIMAX, 46.3 (2025)].
Moreover, we prove that the constant $\tfrac{1}{2}(\kappa(\vec{A})^{E/2} + \kappa(\vec{A})^{-E/2})$ is optimal.
\end{abstract}

\section{Introduction}

Consider a Hermitian matrix $\vec{A}\in\C^{n\times n}$ with eigendecomposition $\sum_{i=1}^{n} \lambda_i \vec{w}_i\vec{w}_i^*
$, a vector $\vec{b}\in\C^n$, and scalar function $f$ defined on the eigenvalues of $\vec{A}$.
Computing 
\[
f(\vec{A})\vec{b}
,\qquad\text{where}\qquad
f(\vec{A}) := \sum_{i=1}^{n} f(\lambda_i) \vec{w}_i\vec{w}_i^*,
\]
is an important task in a diverse collection of applications including
solving systems of linear equations and least squares problems ($f(z) = 1/z$) \cite{hestenes_stiefel_52,greenbaum_97},
sampling from Gaussian distributions and other computations involving matrix square roots ($f(z) = z^{\pm 1/2}$) \cite{chow_saad_14,pleiss_jankowiak_eriksson_damle_gardner_20},
quantum dynamics and the solution of differential equations ($f(z) = \exp(z)$) \cite{druskin_knizhnerman_95,hochbruck_lubich_97,hochbruck_ostermann_10},
lattice quantum chromodynamics ($f(z) = \operatorname{sign}(z)$) \cite{eshof_frommer_lippert_schilling_vanderVorst_02},
Gaussian process regression and log-determinant computation ($f(z) = \log(z)$) \cite{ubaru_chen_saad_17,dong_eriksson_nickisch_bindel_wilson_17},
and fractional differential equations ($f(z) = z^{\gamma}$) \cite{ilic_turner_anh_08,ilic_turner_simpson_09}.

\begin{figure}
    \centering
    \includegraphics[scale=.6]{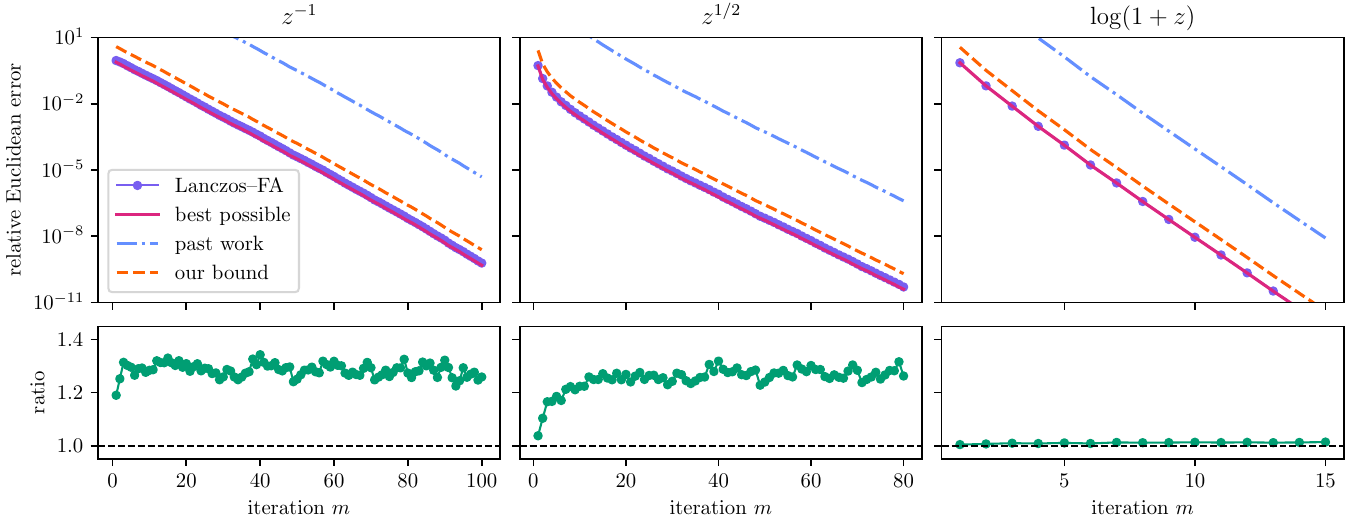}
    \caption{\emph{Top:} Convergence of Lanczos-FA, best approximation from $\Kry_m$, our bound \cref{thm:main_intro}, and \cite[Thm. 3.1]{schweitzer_25} for several matrix functions, on a matrix with $n=400$ eigenvalues geometrically spaced from $1$ to $100$ and Gaussian $\vec{b}$.
    \emph{Bottom:} Ratio of Lanczos-FA error to optimal error. 
    }
    \label{fig:intro}
\end{figure}

The Lanczos method for matrix functions (Lanczos-FA) is perhaps the most widely used algorithm for approximating $f(\vec{A})\vec{b}$.
As with other Krylov Subspace Methods (KSMs), Lanczos-FA iteratively produces approximations to $f(\vec{A})\vec{b}$ from the Krylov subspace
\[
\Kry_m 
= \Kry_m (\vec{A},\vec{b})
:= \operatorname{span}\{\vec{b},\vec{A}\vec{b},\dots,\vec{A}^{m-1}\vec{b}\} = \{p(\vec{A})\vec{b} : p \in \Pol_{m-1}\},
\]
where $\Pol_{m-1}$ is the space of polynomials of degree less than $m$.\footnote{In this work we focus on \emph{polynomial} KSMs. An alternative to these are \emph{rational} KSMs, where the family $\Pol_{m-1}$ is replaced with a family of rational functions \cite{guttelphd}.}
Specifically, the $m$-th Lanczos-FA approximation is\footnote{We remark that \cref{eq:Lanczos-def} is a \emph{mathematical} definition of Lanczos-FA.
A practical implementation will make use of the Lanczos algorithm, which efficiently produces a particular orthonormal basis for $\Kry_m$; see \cite{chen_24} for details.}
\begin{equation}\label{eq:Lanczos-def}
    \vec{f}_m := \vec{V}_m f(\vec{T}_m)\vec{V}_m^*\vec{b},
\end{equation}
where $\vec{V}_m$ is any orthonormal basis of $\Kry_m$ and $\vec{T}_m := \vec{V}_m^* \vec{A}\vec{V}_m$.

This simple algorithm has been highly successful, so over the past 30+ years, understanding the convergence of Lanczos-FA has been the focus of considerable attention in numerical linear algebra \cite{druskin_knizhnerman_91,saad_92,druskin_greenbaum_knizhnerman_98,frommer_simoncini_08,chen_greenbaum_musco_musco_22,schweitzer_25},
theoretical computer science \cite{musco_musco_sidford_18,orecchia_sachdeva_vishnoi_12,sachdeva_vishnoi_14},
and data science/machine learning \cite{ubaru_chen_saad_17,pleiss_jankowiak_eriksson_damle_gardner_20,amsel_chen_greenbaum_musco_musco_24,akhtar_elgadarri_farias_jozefiak_26}. Perhaps the most well-known bound for Lanczos-FA is 
\begin{equation}
\label{eqn:unif_bound}
\| f(\vec{A})\vec{b} - \vec{f}_m \|_2 \leq 2 \|\vec{b}\|_2 \min_{p\in\Pi_{m-1}} \bigg(\max_{z\in[\lmin,\lmax]} |f(z) - p(z) | \bigg),
\end{equation}
which asserts that the convergence of Lanczos-FA is controlled by the best polynomial approximation to $f$ on the interval containing the spectrum of $\vec{A}$.
However, this bound does not take into account any information about $\vec{A}$ besides the smallest and largest eigenvalue, and is therefore extremely pessimistic in many cases. A more refined approach to understanding how well Lanczos-FA works is to compare its accuracy to that of the \emph{best possible} approximation to $f(\vec{A})\vec{b}$ that could be obtained from the Krylov subspace $\Kry_m$ \cite{druskin_greenbaum_knizhnerman_98,amsel_chen_greenbaum_musco_musco_24,schweitzer_25}.
Remarkably the accuracy of Lanczos-FA is nearly optimal; \cite{amsel_chen_greenbaum_musco_musco_24} (NeurIPS '24 Spotlight) provides numerical evidence for this phenomenon, and notes the following:
\begin{quote}
For a wide variety of problem instances, Lanczos-FA is observed to converge almost as quickly as the best approximation to $f(\vec{A})\vec{b}$ that could be returned by any KSM run for the same number of iterations.
\end{quote}
The assertion that Lanczos-FA is nearly optimal means that
\[
 \vvvert f(\vec{A})\vec{b}-\vec{f}_m\vvvert \leq (\text{some moderate value})\cdot  \min_{\vec{x}\in\Kry_m}\vvvert f(\vec{A})\vec{b}-\vec{x}\vvvert,
\]
where $\vvvert\cdot\vvvert$ denotes the norm with which we wish to measure the error. 
A rigorous understanding of this apparent near-instance optimality is of theoretical interest due, if anything, to the mathematical beauty.
There are practical benefits as well, though.
First, if it is known that Lanczos-FA is nearly optimal over $\Kry_m$, it provides insight into the scale of improvements that may be obtained by designing new Krylov subspace methods. 
Second, the error of the best Krylov approximation can be bounded in terms of polynomial approximation on the spectrum of $\vec{A}$. 
For example,
\begin{align}
\min_{\vec{x}\in\Kry_m}\| f(\vec{A})\vec{b}-\vec{x}\|_2
&= \min_{p\in\Pi_{m-1}}\| f(\vec{A})\vec{b}-p(\vec{A})\vec{b}\|_2
\nonumber\\&\leq \min_{p\in\Pi_{m-1}}\| f(\vec{A})-p(\vec{A})\|_2 \|\vec{b}\|_2
\nonumber\\&= \min_{p\in\Pi_{m-1}} \max_{i=1,\ldots,n} | f(\lambda_i)-p(\lambda_i)| \|\vec{b}\|_2. \label{eqn:spec_bound}
\end{align}
This bound depends on fine-grained properties of the spectrum of $\vec{A}$ such as clustered our outlying eigenvalues and is typically fairly sharp; see \cite{chen_greenbaum_musco_musco_22,amsel_chen_greenbaum_musco_musco_24} for further discussion.

\subsection{Near-optimality of Lanczos-FA}

Let $(f,\vec{A},\vec{b})$ define a problem instance, and let $\vvvert\cdot\vvvert$ denote the norm with which we wish to measure the error. To quantify the near-optimality of Lanczos-FA, we define the optimality ratio\footnote{Here we use the convention $1/0=+\infty$ and $0/0 = 1$.}
\begin{equation}\label{eq:constant}
C(f,\vec{A},\vec{b},m,\vvvert\cdot\vvvert)
:= \frac{\vvvert f(\vec{A})\vec{b}-\vec{f}_m\vvvert}{\min_{\vec{x}\in\Kry_m}\vvvert f(\vec{A})\vec{b}-\vec{x}\vvvert }.
\end{equation}
If $C(f,\vec{A},\vec{b},m,\vvvert\cdot\vvvert)$ is not too large, then the convergence of Lanczos-FA is close to that of the best possible Krylov approximation; i.e. near-optimal. 

In the case that $f(z) = 1/z$ and $\vec{A}$ is positive-definite, then Lanczos-FA is mathematically equivalent to the celebrated Conjugate Gradient algorithm, and hence optimal in the $\vec{A}$-norm \cite{greenbaum_97}.\footnote{For positive definite $\vec{M}$, $\|\vec{x}\|_{\vec{M}} := \sqrt{\vec{x}^*\vec{M}\vec{x}}$.}
That is, 
\begin{equation}
    \label{eqn:cg_Aopt}
    C(1/z, \vec{A},\vec{b}, m,\|\cdot\|_{\vec{A}}) = 1.
\end{equation}
Guarantees for other norms (e.g. the Euclidean norm) can then be obtained by the equivalence of norms; see e.g. \cite[(2.2)]{schweitzer_25}. Several past works have proved weaker versions of near-optimality for other functions such as the matrix exponential \cite{druskin_greenbaum_knizhnerman_98} and a class of rational functions \cite{amsel_chen_greenbaum_musco_musco_24}.
However, these bounds are not particularly satisfying in that they do not prove true instance optimality, and at least in the case of \cite{amsel_chen_greenbaum_musco_musco_24}, have an exponential dependence on the condition number $\kappa(\vec{A}) := \lmax/\lmin$ of $\vec{A}$; see \cite{schweitzer_25} for a discussion.

A natural generalization of the inverse function is the class of \emph{Stieltjes functions}.
Indeed, a function $f$ is a Stieltjes function if
\begin{equation}\label{eq:stieltjes}
f(z) = \int_0^\infty \frac{1}{z+t}\dmu,
\end{equation}
where $\mu$ is a positive measure on $[0,\infty)$ with $\int_0^\infty (1+t)^{-1}\dmu < \infty$.
More broadly, define the family
\[
\mathcal{S} := \{ f : f(z)~\text{or}~f(z)/z~\text{is Stieltjes}\}.
\]
This class contains many scientifically relevant functions including the inverse $z^{-1}$, fractional powers $z^{-\gamma}$ with $\gamma \in (-1,1)$, every rational function $\sum_i \sigma_i(z+t_i)^{-1}$ with $\sigma_i > 0$ and $t_i \ge 0$, and the logarithm $\log(1+z)$. Recently, in a major advance, Schweitzer proved Lanczos-FA satisfies a near-instance optimality bound for $\mathcal{S}$ in the Euclidean norm:
\begin{theorem*}[{\cite[Theorems 3.1 and 4.1]{schweitzer_25}}]
For any $f\in\mathcal{S}$, Hermitian positive-definite $\vec{A}\in\C^{n\times n}$, $\vec{b}\in\C^n$, and $m\geq 1$,
\[
C(f,\vec{A},\vec{b},m,\|\cdot\|_2) \leq 1 + \kappa(\vec{A})^2.
\]
\end{theorem*}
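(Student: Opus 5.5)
The plan is to prove $C(f,\vec{A},\vec{b},m,\|\cdot\|_2)\le 1+\kappa(\vec{A})^2$ by reducing to shifted conjugate gradient. For a Stieltjes $f$, write
\[
f(\vec{A})\vec{b}-\vec{f}_m=\int_0^\infty \vec{e}_m(t)\dmu ,\qquad \vec{e}_m(t):=(\vec{A}+t\vec{I})^{-1}\vec{b}-\vec{V}_m(\vec{T}_m+t\vec{I})^{-1}\vec{V}_m^*\vec{b}.
\]
Since $\Kry_m$ is shift invariant, $\vec{e}_m(t)$ is exactly the CG error for the system $(\vec{A}+t\vec{I})\vec{x}=\vec{b}$. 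A standard fact is that all shifted CG errors are collinear: $\vec{e}_m(t)=\frac{\psi_t(\vec{A})}{\psi_t(0)}\ldots$, or more concretely $\vec{e}_m(t)=(\vec{A}+t\vec{I})^{-1}\frac{\chi_m(\vec{A})}{\chi_m(-t)}\vec{b}$, where $\chi_m$ is the characteristic polynomial of $\vec{T}_m$ (the shifted residuals are collinear with $\chi_m(\vec{A})\vec{b}$). Consequently
\[
f(\vec{A})\vec{b}-\vec{f}_m=h(\vec{A})\,\chi_m(\vec{A})\vec{b},\qquad h(z)=\int_0^\infty \frac{1}{(z+t)\,\chi_m(-t)}\dmu .
\]
Because the Ritz values are positive, $\chi_m(-t)$ has the fixed sign $(-1)^m$, so $|h|$ is itself a Stieltjes-type function with positive measure $\mathrm{d}\mu/|\chi_m(-t)|$.

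For the optimal error, let $\vec{x}^\star=p^\star(\vec{A})\vec{b}$ be the best approximation. The key step is a lower bound on $\|f(\vec{A})\vec{b}-\vec{x}^\star\|_2$ in terms of the Lanczos error, obtained by comparing each shifted problem with its optimum. CG optimality gives, for every $t$, $\|\vec{e}_m(t)\|_{\vec{A}+t}\le\|(\vec{A}+t)^{-1}\vec{b}-\vec{y}\|_{\vec{A}+t}$ for all $\vec{y}\in\Kry_m$. However, a single $\vec{x}^\star$ does not decompose into per-$t$ approximants, so I will instead argue through orthogonality. The Lanczos error $\vec{d}:=f(\vec{A})\vec{b}-\vec{f}_m$ satisfies $\vec{d}=\vec{g}+(\vec{f}_m^\perp\text{-part})$, and then $\|f(\vec{A})\vec{b}-\vec{x}^\star\|_2\ge\|(\vec{I}-\vec{V}_m\vec{V}_m^*)f(\vec{A})\vec{b}\|_2$. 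It therefore suffices to show
\[
\|\vec{d}\|_2\le(1+\kappa^2)\,\|(\vec{I}-\vec{P})\vec{d}\|_2 ,
\]
where $\vec{P}$ denotes the orthogonal projector onto $\Kry_m$; note $(\vec{I}-\vec{P})\vec{d}=(\vec{I}-\vec{P})f(\vec{A})\vec{b}$. Writing $\|\vec{d}\|\le\|(\vec{I}-\vec{P})\vec{d}\|+\|\vec{P}\vec{d}\|$, the task reduces to bounding $\|\vec{P}\vec{d}\|\le\kappa^2\|(\vec{I}-\vec{P})\vec{d}\|$.

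For this I use $\vec{P}\vec{e}_m(t)=\vec{V}_m\vec{V}_m^*\vec{e}_m(t)$. Galerkin orthogonality gives $\vec{V}_m^*(\vec{A}+t)\vec{e}_m(t)=0$, so $\vec{V}_m^*\vec{e}_m(t)=-(\vec{T}_m+t)^{-1}\vec{V}_m^*(\vec{A}+t)(\vec{I}-\vec{P})\vec{e}_m(t)$, i.e. $\vec{P}\vec{e}_m(t)=-\vec{V}_m(\vec{T}_m+t)^{-1}\vec{V}_m^*\vec{A}(\vec{I}-\vec{P})\vec{e}_m(t)$. The $t$ term cancels because $\vec{V}_m^*(\vec{I}-\vec{P})=0$. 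Integrating in $t$, and using the collinearity $(\vec{I}-\vec{P})\vec{e}_m(t)=c(t)\vec{r}$ with $\vec{r}$ a fixed vector and $c(t)$ of constant sign, gives
\[
\vec{P}\vec{d}=-\vec{V}_m\Big(\int_0^\infty c(t)(\vec{T}_m+t)^{-1}\dmu\Big)\vec{V}_m^*\vec{A}\vec{r}.
\]
The norm of this is at most $\int |c|\,\|(\vec{T}_m+t)^{-1}\|\,\|\vec{A}\|\,\|\vec{r}\|$. Using $\|(\vec{T}_m+t)^{-1}\|\le(\lmin+t)^{-1}$ and comparing with $\|(\vec{I}-\vec{P})\vec{d}\|=\int|c|\,\|\vec{r}\|$ (no cancellation, by the constant sign) gives only $\int|c|/(\lmin+t)$ against $\int|c|$, which is not scale-matched.

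This scale mismatch is the main obstacle, and I would fix it by weighting differently. I will use $\vec{e}_m(t)$ written as $(\vec{A}+t)^{-1}\vec{s}(t)$ with $\vec{s}(t)=\chi_m(\vec{A})\vec{b}/\chi_m(-t)$ collinear. The quantities to compare are then $\int \frac{1}{|\chi_m(-t)|}\vec{V}\ldots$, and the ratio between $\|(\vec{A}+t)^{-1}\|$ and its reciprocal bounds on the spectrum produces factors $(\lmax+t)/(\lmin+t)\le\kappa$. I expect two such factors, one from $\vec{A}$ and one from $(\vec{T}_m+t)^{-1}$ versus $(\vec{A}+t)^{-1}$, which yields $\kappa^2$.

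For $f(z)/z$ Stieltjes, I write $f(\vec{A})\vec{b}-\vec{f}_m=\int t\,[\ldots]$ via $\frac{z}{z+t}=1-\frac{t}{z+t}$, which reduces to the same shifted structure, and repeat the argument.
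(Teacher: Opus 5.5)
\textbf{Gap: the claimed collinearity is false, and the key inequality is never proved.} The step that fails is $(\vec{I}-\vec{P})\vec{e}_m(t)=c(t)\vec{r}$ with a fixed vector $\vec{r}$. The quantities that are collinear across shifts are the residuals $(\vec{A}+t\vec{I})\vec{e}_m(t)=\chi_m(\vec{A})\vec{b}/\chi_m(-t)$, not the projected errors. Since the shifted CG iterate lies in $\Kry_m$, you have $(\vec{I}-\vec{P})\vec{e}_m(t)=(\vec{I}-\vec{P})(\vec{A}+t\vec{I})^{-1}\vec{b}$. For two distinct shifts these are linearly independent as soon as $M\ge m+2$. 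Otherwise some combination $\big((z+t_1)^{-1}-\gamma(z+t_2)^{-1}\big)$ would agree with a $p\in\Pol_{m-1}$ at $M\ge m+2$ eigenvalues, which forces $t_1=t_2$. Hence, unless $M=m+1$ or $\mu$ is a point mass, two things break. Your formula for $\vec{P}\vec{d}$ with a fixed $\vec{r}$ is wrong. So is the ``no cancellation'' identity $\|(\vec{I}-\vec{P})\vec{d}\|_2=\int|c|\,\|\vec{r}\|_2$. Beyond this, the inequality $\|\vec{P}\vec{d}\|_2\le\kappa^2\|(\vec{I}-\vec{P})\vec{d}\|_2$, which is the whole content of the theorem, is never established. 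Your final paragraph only guesses where two factors of $\kappa$ might come from. Also, the ``scale mismatch'' you diagnose comes from dropping the factor $\|\vec{A}\|_2=\lmax$. With it, $\lmax/(\lmin+t)\le\kappa$, so if collinearity held you would already be done, with $\kappa$ in place of $\kappa^2$.

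\textbf{Two ways to repair it.} Within your framework, the correct rank-one object is the operator $\vec{V}_m^*\vec{A}(\vec{I}-\vec{P})$, not the vectors $(\vec{I}-\vec{P})\vec{e}_m(t)$. Because $\vec{A}\Kry_m\subseteq\Kry_{m+1}$, one has $\vec{V}_m^*\vec{A}(\vec{I}-\vec{P})=(\vec{V}_m^*\vec{A}\vec{v}_{m+1})\vec{v}_{m+1}^*$, where $\vec{v}_{m+1}=\chi_m(\vec{A})\vec{b}/\|\chi_m(\vec{A})\vec{b}\|_2$. Your Galerkin identity then gives $\vec{P}\vec{d}=-\vec{V}_m\big(\int c(t)(\vec{T}_m+t\vec{I})^{-1}\dmu\big)\vec{V}_m^*\vec{A}\vec{v}_{m+1}$. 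Here the scalar is $c(t)=\vec{v}_{m+1}^*\vec{e}_m(t)=\vec{b}^*\chi_m(\vec{A})(\vec{A}+t\vec{I})^{-1}\chi_m(\vec{A})\vec{b}\,/\,\big(\|\chi_m(\vec{A})\vec{b}\|_2\,\chi_m(-t)\big)$, which genuinely has constant sign. So $\|\vec{P}\vec{d}\|_2\le(\lmax/\lmin)\int|c|\dmu$, while $\|(\vec{I}-\vec{P})\vec{d}\|_2\ge|\vec{v}_{m+1}^*\vec{d}|=\int|c|\dmu$. Pythagoras then yields $C\le\sqrt{1+\kappa^2}$. The case $f(z)/z$ Stieltjes works the same way with $\dmu$ replaced by $t\dmu$.

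The paper's route (its \cref{thm:main_intro} at $\alpha=0$, which implies this statement) avoids shifts entirely. It starts from the factorization you already derived, $\vec{d}=\pm\chi_m(\vec{A})\varsigma_m(\vec{A})\vec{b}$ with $\varsigma_m$ Stieltjes. It then uses $\vec{b}^*\chi_m(\vec{A})p(\vec{A})\vec{b}=0$ for all $p\in\Pol_{m-1}$ to replace $\varsigma_m$ by $\varsigma_m-\hat g$ inside $\ip{\vec{d}}{p(\vec{A})\vec{b}}$ before applying Cauchy--Schwarz. This gives $\|\vec{P}\vec{d}\|_2\le\frac{R-1}{R+1}\|\vec{d}\|_2$ with $R=\max\varsigma_m/\min\varsigma_m\le\kappa$ over the spectrum. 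Hence $C\le\frac12(\kappa^{1/2}+\kappa^{-1/2})$, far below $1+\kappa^2$.
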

\subsection{Our results}

Our first result, which we prove in \cref{sec:upper}, is a strengthening and generalization of the above result of \cite{schweitzer_25}.\footnote{In \cite{schweitzer_25}, a stronger bound based on an intermediate quantity produced by the Lanczos algorithm is also proved.}
\begin{theorem}\label{thm:main_intro}
For any $f\in\mathcal{S}$, Hermitian positive-definite $\vec{A}\in\C^{n\times n}$, $\vec{b}\in\C^n$, $m\geq 1$, and $\alpha\in\R$,
\[
C(f,\vec{A},\vec{b},m,\|\cdot\|_{\vec{A}^{\alpha}}) \leq \frac{1}{2}\big( \kappa(\vec{A})^{E/2} + \kappa(\vec{A})^{-E/2} \big),
\qquad
E:=\max\{\alpha,1-\alpha\}.
\]
\end{theorem}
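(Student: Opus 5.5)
The plan is to reduce everything to shifted CG and then apply a reverse Cauchy--Schwarz (Kantorovich / P\'olya--Szeg\H{o}) inequality.

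\textbf{Error representation.} First let $f$ be Stieltjes. Since $\vec{V}_m\vec{V}_m^*\vec{b}=\vec{b}$, we can write
\[
f(\vec{A})\vec{b}-\vec{f}_m=\int_0^\infty \big[(\vec{A}+t\vec{I})^{-1}\vec{b}-\vec{V}_m(\vec{T}_m+t\vec{I})^{-1}\vec{V}_m^*\vec{b}\big]\dmu .
\]
The integrand is the error of the $m$-th CG iterate for the shifted system $(\vec{A}+t\vec{I})\vec{x}=\vec{b}$, because $\Kry_m$ is shift invariant. The Galerkin residual is the standard shifted residual polynomial. Let $q(z):=\det(z\vec{I}-\vec{T}_m)$. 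Then the residual is $q(\vec{A})\vec{b}/q(-t)$: this polynomial has degree $m$, takes the value $1$ at $z=-t$ in the variable $z$, and $q(\vec{A})\vec{b}\perp\Kry_m$ by Cayley--Hamilton for $\vec{T}_m$, which is exactly the Galerkin condition. Hence
\[
f(\vec{A})\vec{b}-\vec{f}_m=g(\vec{A})\,q(\vec{A})\vec{b},\qquad g(z):=\int_0^\infty\frac{1}{(z+t)\,q(-t)}\dmu .
\]
The Ritz values are positive, so $q(-t)=(-1)^m|q(-t)|$ with $|q(-t)|\ge q(0)$-type lower bounds, and $|q(-t)|$ grows like $t^m$. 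Therefore the integral converges, and $(-1)^m g$ is a Stieltjes function with positive measure $\mathrm{d}\mu/|q(-t)|$.

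If instead $f(z)/z$ is Stieltjes, write $z/(z+t)=1-t/(z+t)$ and use $\vec{V}_m\vec{V}_m^*\vec{b}=\vec{b}$ again. The constant terms cancel, and the error has the same form with $\mathrm{d}\mu$ replaced by $-t\,\mathrm{d}\mu$. So in both cases the error is $g(\vec{A})q(\vec{A})\vec{b}$ with $\pm g$ Stieltjes. In particular, on $(0,\infty)$ the function $|g|$ is positive and nonincreasing, while $z|g(z)|$ is nondecreasing, since $z/(z+t)$ is increasing in $z$.

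\textbf{Lower bound on the optimal error.} Let $\vec{e}:=g(\vec{A})q(\vec{A})\vec{b}$ and $\vec{z}:=q(\vec{A})\vec{b}\in\Kry_m^\perp$. For every $\vec{y}\in\Kry_m$, Cauchy--Schwarz in the $\vec{A}^{\alpha}$ inner product gives
\[
|\vec{z}^*\vec{e}|=|\vec{z}^*(\vec{e}+\vec{y})|\le \|\vec{e}+\vec{y}\|_{\vec{A}^\alpha}\,\|\vec{z}\|_{\vec{A}^{-\alpha}} .
\]
Let $\mathrm{d}\nu$ be the discrete measure with weights $q(\lambda_i)^2|\vec{w}_i^*\vec{b}|^2$ at $\lambda_i$. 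In this notation,
\[
C^2\le\frac{\int \lambda^\alpha g^2\,\mathrm{d}\nu\;\int\lambda^{-\alpha}\,\mathrm{d}\nu}{\big(\int g\,\mathrm{d}\nu\big)^2}.
\]
Set $u=\lambda^{\alpha/2}|g|$ and $v=\lambda^{-\alpha/2}$, so that $\int uv\,\mathrm{d}\nu=|\int g\,\mathrm{d}\nu|$; here we use that $g$ has constant sign. The P\'olya--Szeg\H{o} inequality then says: if $0<a\le u/v\le b$ on the support, the ratio is at most $(a+b)^2/(4ab)$. This quantity equals $\tfrac14(\sqrt{b/a}+\sqrt{a/b})^2$.

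\textbf{Range of $u/v=\lambda^\alpha|g(\lambda)|$ on $[\lmin,\lmax]$.} For $\alpha\in[0,1]$, write $\lambda^\alpha|g|=(\lambda|g|)^\alpha|g|^{1-\alpha}$, a product of a nondecreasing and a nonincreasing factor. Comparing values at $\lambda\le\lambda'$ then shows the max/min ratio is at most $\kappa^{\max\{\alpha,1-\alpha\}}$:
\begin{itemize}
\item the increasing factor gains at most $(\lambda'/\lambda)^\alpha$, because $\lambda|g|$ itself increases by at most the factor $\lambda'/\lambda$ (as $|g|$ decreases);
\item symmetrically, the decreasing factor loses at most $(\lambda'/\lambda)^{1-\alpha}$.
\end{itemize}
For $\alpha>1$, the function $\lambda^{\alpha}|g|=\lambda^{\alpha-1}(\lambda|g|)$ is increasing and is at most multiplied by $\kappa^{\alpha}$. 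For $\alpha<0$, the function is decreasing and is at most multiplied by $\kappa^{1-\alpha}$. Taking $b/a\le\kappa^{E}$ yields
\[
C\le\tfrac12\big(\kappa^{E/2}+\kappa^{-E/2}\big).
\]

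\textbf{Main obstacle.} The step requiring most care is the error representation. It needs the collinear residual formula for shifted CG, the constant sign of $q(-t)$ for $t\ge0$, and integrability of $g$. It also needs the degenerate cases: if the Krylov space is invariant then $q(\vec{A})\vec{b}=0$ and both errors vanish, giving $0/0=1$. The reverse Cauchy--Schwarz step and the monotonicity bookkeeping are routine once that representation is in hand.
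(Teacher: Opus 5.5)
Your proof is correct, and it reaches the paper's constant by a genuinely different route at the central step.

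\textbf{What you share with the paper.} You use the same three ingredients:
\begin{enumerate}
\item the factorization $\vec{e}=g(\vec{A})\pi_m(\vec{A})\vec{b}$ with $\pm g$ Stieltjes (you derive it from the collinearity of shifted CG residuals, whereas the paper goes through interpolation at the Ritz values and \cref{lem:factorisation});
\item the orthogonality $\pi_m(\vec{A})\vec{b}\perp\Kry_m$;
\item the monotonicity argument of \cref{lem:oscillation}.
\end{enumerate}

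\textbf{Where you differ.} The paper splits the error by Pythagoras and bounds the excess term, $\|\vec{P}\vec{e}\|_\nu\le\tau\|\vec{e}\|_\nu$. It does this by subtracting the harmonic mean $\hat g$ and applying a forward Cauchy--Schwarz. You instead use $\vec{z}=\pi_m(\vec{A})\vec{b}$ as a dual certificate that bounds the optimal error from below. This gives $C\le\|\vec{e}\|_{\vec{A}^\alpha}\|\vec{z}\|_{\vec{A}^{-\alpha}}/|\vec{z}^*\vec{e}|$, and you close with the reverse Cauchy--Schwarz (P\'olya--Szeg\H{o}/Cassels) inequality.

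\textbf{What your route buys.} It makes the sharpness transparent. When $n=m+1$, the $\vec{A}^\alpha$-orthogonal complement of $\Kry_m$ is spanned by $\vec{A}^{-\alpha}\vec{z}$. So your dual bound holds with equality, and your bound on $C^2$ is exactly the right-hand side of \cref{eq:kantorovich}. The upper bound is then just P\'olya--Szeg\H{o}. The paper's hard instance (weights $\propto\varphi^{-1}$ on the two extreme nodes) is precisely its equality case, which the paper instead reaches through a separate divided-difference computation.

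\textbf{What the paper's route buys.} It is self-contained and needs no external inequality. It also yields the geometric statement that the error's angle with $\Kry_m$ in the $\nu$-inner product is bounded away from zero.

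Both arguments work verbatim for any norm $\|\cdot\|_\nu$ induced by a positive function of $\vec{A}$; in yours, test against $\nu(\vec{A})^{-1}\vec{z}$.
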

\noindent In particular, when $\alpha=0$, we obtain a near-optimality guarantee for the Euclidean norm that scales with $\kappa(\vec{A})^{1/2}$.

\Cref{thm:main_intro}, as with past work \cite{amsel_chen_greenbaum_musco_musco_24,schweitzer_25}, depends on the condition number $\kappa(\vec{A})$ of $\vec{A}$.
One wonders whether this dependence can be removed, for instance, by working in a different norm.
Our second result, which we prove in \cref{sec:lower}, asserts that the value of the upper bound in \cref{thm:main_intro} cannot be improved. 

\begin{theorem}\label{thm:lower_m}
Fix $\alpha \in \R$, $E := \max\{\alpha, 1-\alpha\}$, $\kappa > 1$ and $m \geq 1$.
For every $\eta \in (0,1]$, $n \geq m+1$ and every Hermitian positive definite $\vec{A} \in \mathbb{C}^{n \times n}$ with at least $m+1$ distinct eigenvalues and condition number $\kappa$, there exists a non-zero vector $\vec{b}$ and Stieltjes function $f$ such that
\[
C(f,\vec{A},\vec{b},m,\|\cdot\|_{\vec{A}^\alpha}) \ge (1-\eta)\,\frac12\big(\kappa^{E/2} + \kappa^{-E/2}\big).
\]
\end{theorem}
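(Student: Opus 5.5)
The plan is to reduce to a single-pole Stieltjes function, for which Lanczos-FA is conjugate gradients (CG) on a shifted matrix. The optimality ratio then becomes a purely geometric quantity comparing two inner products on a codimension-one problem. Finally, a Kantorovich-type extremal computation on the two extreme eigenvalues gives the constant.

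\textbf{Step 1: reduction to a fixed optimality norm.} Take $f(z)=c/(z+t)$ with $c>0$ and $t\ge 0$, which is Stieltjes with $\mu=c\,\delta_t$. Then $\vec f_m$ is the CG iterate for $(\vec A+t\vec I)\vec x=c\vec b$, and by \cref{eqn:cg_Aopt} it is optimal over $\Kry_m$ in the $\|\cdot\|_{\vec A+t\vec I}$ norm. For $\alpha\ge 1/2$ (so $E=\alpha$) I will let $t\to\infty$ with $c=t$: then $\|\cdot\|_{\vec A+t\vec I}/\sqrt t\to\|\cdot\|_2=\|\cdot\|_{\vec A^0}$. For $\alpha<1/2$ (so $E=1-\alpha$) I take $t=0$, giving optimality in $\|\cdot\|_{\vec A^1}$. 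Write $\beta\in\{0,1\}$ for the chosen exponent, so that $|\alpha-\beta|=E$.

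Since Krylov quantities depend continuously on $t$ when $\Kry_m$ has full dimension, it suffices to show that the ratio for the idealized $\|\cdot\|_{\vec A^\beta}$-optimal Krylov approximation can be made $\ge(1-\eta/2)\tfrac12(\kappa^{E/2}+\kappa^{-E/2})$. Taking $t$ large then loses at most another factor $1-\eta/2$.

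\textbf{Step 2: geometry of the ratio.} Choose $\vec b$ supported on exactly $m+1$ eigenvectors with distinct eigenvalues $\lambda_{\min}=\lambda_0<\lambda_1<\dots<\lambda_m=\lambda_{\max}$. Work in that $(m+1)$-dimensional invariant subspace, in eigen-coordinates. There $\Kry_m$ has codimension one. For the inner product $\ip{\vec u}{\vec v}_\gamma=\vec u^*\vec A^\gamma\vec v$, the $\gamma$-orthogonal complement of $\Kry_m$ is spanned by $\vec e_\gamma$ with
\[
(\vec e_\gamma)_i=\frac{1}{\lambda_i^\gamma\,\overline{b_i}\,\prod_{j\ne i}(\lambda_i-\lambda_j)} .
\]
This follows because the divided-difference functional $\sum_i p(\lambda_i)/\prod_{j\ne i}(\lambda_i-\lambda_j)$ annihilates $\Pol_{m-1}$.

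Let $\vec g=f(\vec A)\vec b$, which generically has a nonzero component along these complements. The $\beta$-optimal error is $P_\beta\vec g=\vec e_\beta\,\ip{\vec g}{\vec e_\beta}_\beta/\|\vec e_\beta\|_\beta^2$. The best $\alpha$-norm error is $|\ip{\vec g}{\vec e_\alpha}_\alpha|/\|\vec e_\alpha\|_\alpha$. Because $\vec g-P_\beta\vec g\in\Kry_m$, we have $\ip{\vec g}{\vec e_\alpha}_\alpha=\ip{P_\beta\vec g}{\vec e_\alpha}_\alpha$. Hence the ratio equals
\[
\frac{\|\vec e_\beta\|_\alpha\,\|\vec e_\alpha\|_\alpha}{|\ip{\vec e_\beta}{\vec e_\alpha}_\alpha|},
\]
the secant of the $\alpha$-angle between $\vec e_\beta$ and $\vec e_\alpha$, independent of $f$.

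\textbf{Step 3: extremal choice of $\vec b$.} Make the interior weights $|b_1|,\dots,|b_{m-1}|$ huge compared with $|b_0|$ and $|b_m|$. Then the interior coordinates of $\vec e_\gamma$ become negligible, and the ratio converges to the two-dimensional one with effective weights $w_0=|b_0\prod_{j\ne0}(\lambda_0-\lambda_j)|^{-1}$ and $w_m$ defined analogously. In that case, with $\vec e_\gamma=(\lambda_0^{-\gamma}w_0,\ \pm\lambda_m^{-\gamma}w_m)$ and signs matching for $\gamma=\alpha,\beta$, the secant equals
\[
\frac{\sqrt{(x+y)\,(x\,r^{2\delta}+y\,r^{-2\delta})}}{x+y}\cdot(\text{normalisation}),\qquad r=\sqrt{\kappa},\ \delta=\alpha-\beta,
\]
after substituting $x,y>0$ for the relevant weighted squares. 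Optimizing the ratio $x:y$, which is free through $|b_0|/|b_m|$, gives exactly the Kantorovich value $\tfrac12(\kappa^{|\delta|/2}+\kappa^{-|\delta|/2})=\tfrac12(\kappa^{E/2}+\kappa^{-E/2})$.

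\textbf{Main obstacle.} I expect the hardest part to be the careful limiting argument: making the interior weights large, and simultaneously $t\to\infty$ in the case $\beta=0$, while keeping $\vec g$ with a nonzero component and keeping every loss within the factor $1-\eta$. The two-dimensional Kantorovich optimization is routine once set up.
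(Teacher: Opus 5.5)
Your argument is correct. It arrives at exactly the paper's identity \cref{eq:kantorovich} with the same hard instance: a single pole at $-t$ with $t=0$ or $t\to\infty$, and $\vec b$ concentrated on the two extreme eigenvalues. What differs is how you obtain the Lanczos-FA error. The paper computes that error pointwise. \cref{lem:prescribe} shows that the weights $|\widehat b_i|^2=(g_i\omega_i^2)^{-1}$ force $\pi_m(\lambda_i)=g_i\omega_i/G$, and \cref{lem:factorisation} with $\mu=\delta_s$ then gives $e(\lambda_i)$ explicitly. Only the optimal error is obtained by projecting onto the one-dimensional complement (\cref{lem:divdiff}). You instead use that Lanczos-FA for $c/(z+t)$ is CG on $\vec A+t\vec I$. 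Its error is therefore $(\vec A+t\vec I)$-orthogonal to $\Kry_m$, which pins it, up to scale, to a single complement direction. Both errors are then projections onto complement directions, and the ratio is the secant of an angle. This avoids Ritz values and the interpolation representation entirely, and it makes transparent why the ratio depends on $f$ only through the pole location. The paper's route, in return, uses the same ingredients as the upper bound (\cref{lem:factorisation} and the orthogonality of $\pi_m$). It therefore shows which inequalities in \cref{thm:upper} and \cref{lem:oscillation} become tight, and the same computation would give the ratio for any $f\in\mathcal{S}$ on $m+1$ nodes. Your CG argument, as you use it, needs a single pole.

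Two points need repair.

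First, your Step~3 display is wrong as written. With a constant normalisation, $\sqrt{(xr^{2\delta}+yr^{-2\delta})/(x+y)}$ is a weighted mean whose supremum over $x:y$ is $\kappa^{|\delta|/2}$, not the Kantorovich value. Take $x=\lambda_0^{-\alpha}w_0^2$ and $y=\lambda_m^{-\alpha}w_m^2$, so that $x+y=\|\vec e_\alpha\|_\alpha^2$. Then the secant is
\[
\frac{\sqrt{(x+y)\,(x r^{-2\delta}+y r^{2\delta})}}{x r^{-\delta}+y r^{\delta}},
\]
which is maximised at $xr^{-\delta}=yr^{\delta}$ with value $\tfrac12(\kappa^{\delta/2}+\kappa^{-\delta/2})$.

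Second, do not justify $t\to\infty$ by continuity of the errors. With $c=t$ you have $f(\vec A)\vec b\to\vec b\in\Kry_m$, so both errors tend to $0$ and the ratio becomes a $0/0$ limit. What you need is continuity of the error \emph{direction}, and your Step~2 already supplies it. For the inner product $\vec x^*(\vec A+t\vec I)\vec y$, the complement is spanned by $(\vec e_{(t)})_i=((\lambda_i+t)\overline{b_i}\omega_i)^{-1}$, where $\omega_i=\prod_{j\ne i}(\lambda_i-\lambda_j)$, and $t\,\vec e_{(t)}\to\vec e_0$.

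Better still, the secant with $\vec e_{(t)}$ in place of $\vec e_\beta$ is exactly \cref{eq:kantorovich} with $\varphi_{i,t}=\lambda_i^\alpha/(\lambda_i+t)$ and $h_i=\lambda_i^{-\alpha}|b_i\omega_i|^{-2}$. This is an explicit formula for every $t$, so your ``main obstacle'' disappears: fix $t$ first, then shrink the interior weights. The error is also automatically nonzero. Indeed, $1-(z+t)p(z)$ with $p\in\Pol_{m-1}$ is a nonzero polynomial of degree at most $m$, so it cannot vanish at $m+1$ distinct eigenvalues.
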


\begin{figure}
    \centering
    \includegraphics[scale=.6]{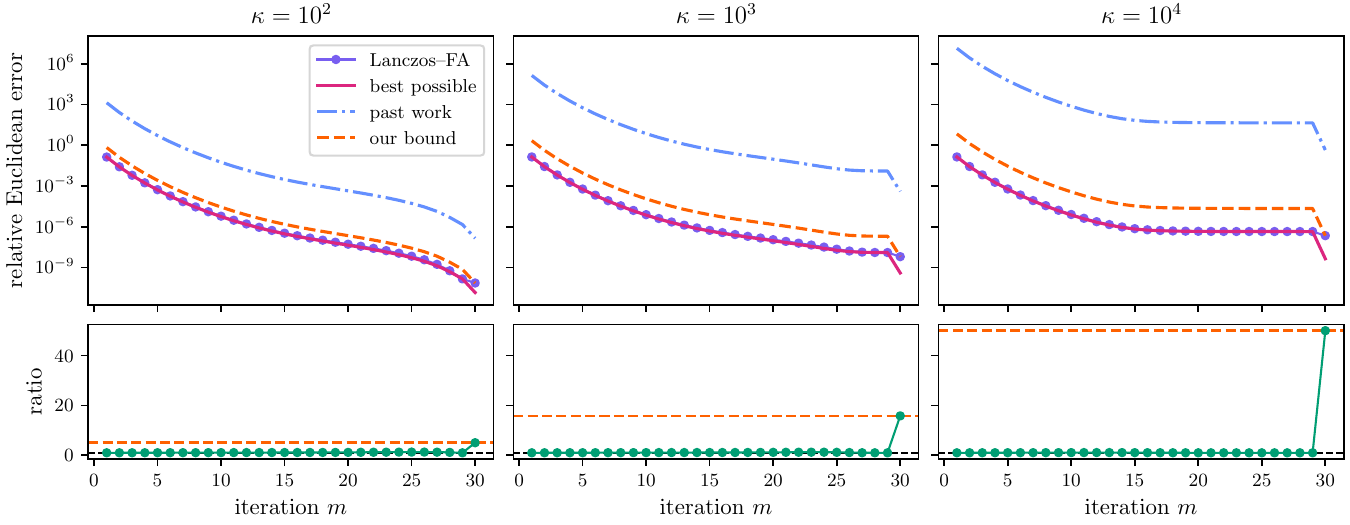}
    \caption{\emph{Top:} Convergence of Lanczos-FA, best approximation from $\Kry_m$, our bound \cref{thm:main_intro}, and \cite[Thm. 3.1]{schweitzer_25} for the hard instance used to prove \cref{thm:lower_m} at $m=30$ and varying $\kappa$, using $n=m+1$ geometrically spaced eigenvalues.
    \emph{Bottom:} Ratio of Lanczos-FA error to optimal error. Observe that at $m=30$ our upper bound (indicated by the upper dashed line) is sharp.}
    \label{fig:hard}
\end{figure}

Since Lanczos-FA is mathematically equivalent to Conjugate Gradient when $f(z) = 1/z$ and $\vec{A}$ is Hermitian positive definite \cite{greenbaum_97}, we also obtain  bounds for Conjugate Gradient.
For instance, we have the following guarantee for the Euclidean norm.
\begin{remark}
    \Cref{thm:main_intro} implies $C(1/z, \vec{A},\vec{b}, m,\|\cdot\|_{2}) \leq \tfrac12(\kappa(\vec{A})^{1/2} + \kappa(\vec{A})^{-1/2})$.
    This improves on the typical bound $C(1/z, \vec{A},\vec{b}, m,\|\cdot\|_{2}) \leq \kappa(\vec{A})^{1/2}$ obtained by using \cref{eqn:cg_Aopt} and that $\lmin(\vec{A})\|\vec{x}\|_2^2 \leq \|\vec{x}\|_{\vec{A}}^2 \leq \lmax(\vec{A})\|\vec{x}\|_2^2$.
    Moreover, the hard instance used in the proof of \cref{thm:lower_m} at $\alpha = 0$ uses the function $1/z$, so the value $\tfrac12(\kappa(\vec{A})^{1/2} + \kappa(\vec{A})^{-1/2})$ cannot cannot be improved.
\end{remark}

\subsection{Comparison of analysis strategies}

The argument of \cite{schweitzer_25} is carried out at the matrix level by decomposing Lanczos-FA error vector $f(\vec{A})\vec{b} - \vec{f}_m$ into part due to the error vector of the best Euclidean approximation $\vec{V}_m\vec{V}_m^\T f(\vec{A})\vec{b}$ and a part orthogonal to the best approximation.
It is shown that the orthogonal component can be expressed in terms of submatrices of the final matrix $\vec{T}_m$ obtained by running Lanczos to completion.
The error is then bounded by a careful application of the Woodbury identity and by analyzing certain auxiliary functions which are proved to be Stieltjes and hence satisfy certain monotonicity properties.

In contrast, our proof of \cref{thm:main_intro} works with the \emph{norm} of the Lanczos-FA error.
Similar to \cite{schweitzer_25}, which we decompose the Lanczos-FA error into a contribution from the optimal error and an excess term.
However, by working with the error norm (rather than the error vector) we can make use of properties of weighted $L^2$ spaces.
This simplifies the required bookkeeping substantially and helps avoid introducing unnecessary slackness into our analysis.
Indeed, our proof of \cref{thm:lower_m} works by identifying when the inequality we use is an equality.
Moreover, our approach works for any norm induced by a positive function of $\vec{A}$.

\subsection{Notation}
\label{sec:notation}

We measure errors in norms induced by functions of $\vec{A}$.
For $\nu$ positive on the spectrum of $\vec{A}$ we define
\[
\langle \vec{x},\vec{y}\rangle_\nu
= \vec{x}^* \nu(\vec{A})\vec{y}
,\qquad\|\vec{x}\|_\nu = \langle\vec{x},\vec{x}\rangle_\nu^{1/2}.
\]
Then $\nu \equiv 1$ corresponds to the Euclidean norm and $\nu(z) = z$ the $\vec{A}$-norm. Let $M$ be the first index for which $\Kry_M = \Kry_{M+1}$.
When $m\geq M$, it is well-known that Lanczos-FA is exact  \cite[\S3.2]{saad_92}.
Therefore, we assume, without loss of generality, that $m < M$.
In this case the Ritz values (eigenvalues of $\vec{T}_m$) are denoted $\theta_1,\dots,\theta_m$ are distinct and are contained in $(\lmin,\lmax)$; see \cite[\S4]{golub_meurant_09}.
Finally, we define the polynomial
\begin{equation}\label{eq:pim}
    \pi_m(z) := \prod_{j=1}^{m}(z-\theta_j),
\end{equation}
which will take a central role in our proof of the upper bound. 

\section{The upper bound}
\label{sec:upper}

This section is dedicated to the proof of \cref{thm:main_intro}. 
Our proof can be broken down into simple steps. 
Let $\vec{P}$ be the $\langle \cdot, \cdot\rangle_{\nu}$ orthogonal projection onto $\Kry_m$. 
The first step is noting that since $\vec{f}_m \in \Kry_m$ an application of Pythagorean theorem yields
\begin{align}
\begin{split}\label{eq:intropyth}
    \|f(\vec{A})\vec{b} - \vec{f}_m\|_{\nu}^2 &= \|(\vec{I} - \vec{P}) f(\vec{A})\vec{b}\|_{\nu}^2 + \|\vec{P}(f(\vec{A})\vec{b} - \vec{f}_m)\|_{\nu}^2\\
    &=\Big(\min_{\vec{x}\in\Kry_m}\|f(\vec{A})\vec{b}-\vec{x}\|_\nu\Big)^2 + \|\vec{P}(f(\vec{A})\vec{b} - \vec{f}_m)\|_{\nu}^2.
\end{split}
\end{align}
Next, we use a well-known fact if $q_{m-1}\in \Pol_{m-1}$ is the polynomial that interpolates $f$ on the Ritz values (eigenvalues of $\vec{T}_m$), then
\begin{equation}\label{eq:interpolation}
    \vec{f}_m = q_{m-1}(\vec{A})\vec{b};
\end{equation}
see \cite[Corollary 6.3]{chen_24}.
Existing results from \cite{frommer_guttel_schweitzer_14} allow us to obtain an explicit representation for the error function $e = f-q_{m-1}$. 
This expression will allow us to show $\|\vec{P}(f(\vec{A})\vec{b} - \vec{f}_m)\|_{\nu} \leq \tau \|f(\vec{A})\vec{b} - \vec{f}_m\|_{\nu}$ for some $\tau \in (0,1)$ depending only on the eigenvalues of $\vec{A}$, the choice of norm $\|\cdot\|_{\nu}$, and the function $f$. 
Combining this inequality and \cref{eq:intropyth} yields
\begin{equation}\label{eq:desiderata}
    \|f(\vec{A})\vec{b} - \vec{f}_m\|_{\nu}\leq \frac{1}{\sqrt{1-\tau^2}} \min_{\vec{x}\in\Kry_m}\|f(\vec{A})\vec{b}-\vec{x}\|_\nu.
\end{equation}
So, it suffices to obtain a sufficiently good value of $\tau$.

We begin by recalling an existing explicit representation for the error function $e = f-q_{m-1}$.

\begin{lemma}\label[lemma]{lem:factorisation}
Suppose $1\leq m<M$, and let $q_{m-1}$ be the polynomial interpolating $f$ at the Ritz values $\theta_1,\ldots,\theta_m$. Define $e = f - q_{m-1}$. Then
\begin{enumerate}
    \item 
If $f$ is Stieltjes with representing measure $\mu$, then $e = (-1)^{m}\pi_m\varsigma_m$
where
\[
\varsigma_m(z)
=
\int_0^\infty\frac{1}{z+t}\,\mathrm{d}\varrho_m(t),
\qquad
\mathrm{d}\varrho_m(t)
=
\frac{\mathrm{d}\mu(t)}
     {\prod_{j=1}^m(\theta_j+t)}.
\]

\item 
If $f(z)/z$ is Stieltjes with representing measure $\mu$, then $e = (-1)^{m+1}\pi_m\varsigma_m$
where
\[
\varsigma_m(z)
=
\int_0^\infty\frac{1}{z+t}\,\mathrm{d}\varrho_m(t),
\qquad
\mathrm{d}\varrho_m(t)
=
\frac{t\,\mathrm{d}\mu(t)}
     {\prod_{j=1}^m(\theta_j+t)}.
\]
\end{enumerate}
In either case $\varsigma_m\geq0$ on $(0,\infty)$. If
$\varrho_m\neq0$, then $\varsigma_m>0$ there.
\end{lemma}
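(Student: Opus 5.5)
The plan is to verify the factorisation directly for the resolvent kernel and then integrate against $\mu$. Fix $t\ge 0$ and let $r_t(z) = 1/(z+t)$. The key scalar identity is the interpolation error formula for $r_t$ at the nodes $\theta_1,\dots,\theta_m$. Since $r_t$ has a single simple pole at $-t$, the polynomial $q^{(t)}_{m-1}$ interpolating $r_t$ at the Ritz values satisfies
\[
r_t(z) - q^{(t)}_{m-1}(z) = \frac{\pi_m(z)}{\pi_m(-t)}\,\frac{1}{z+t}.
\]
To check this, note that the function $g(z) := \bigl(1 - \pi_m(z)/\pi_m(-t)\bigr)/(z+t)$ is a polynomial of degree $m-1$, because the numerator vanishes at $z=-t$. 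Moreover $g(\theta_j) = r_t(\theta_j)$ for every $j$. Since the $\theta_j$ are distinct, uniqueness of interpolation gives $q^{(t)}_{m-1} = g$. Here $\pi_m(-t) = (-1)^m\prod_j(\theta_j+t)$, and this is nonzero because $\theta_j\in(\lmin,\lmax)\subset(0,\infty)$ by \cref{sec:notation}. Hence
\[
r_t - q^{(t)}_{m-1} = (-1)^m \pi_m(z)\,\frac{1}{(z+t)\prod_j(\theta_j+t)}.
\]

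For case 1, integrate in $t$ against $\mu$. Interpolation at fixed nodes is linear; concretely, $q_{m-1}(z) = \sum_j f(\theta_j)\ell_j(z)$ with Lagrange basis $\ell_j$. Substituting $f(\theta_j)=\int r_t(\theta_j)\dmu$ gives $q_{m-1} = \int q^{(t)}_{m-1}\dmu$, provided the integrals are finite. They are, since each $\theta_j>0$ and $\int(1+t)^{-1}\dmu<\infty$. Integrating the identity above then gives $e = (-1)^m\pi_m\varsigma_m$ with $\mathrm{d}\varrho_m = \mathrm{d}\mu/\prod_j(\theta_j+t)$. For $z>0$ the integrand of $\varsigma_m$ is bounded by a constant times $(1+t)^{-1-m}$, so $\varsigma_m$ is finite.

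For case 2, write $f(z) = z h(z)$ with $h$ Stieltjes, so $f(z) = \int z/(z+t)\dmu = \int \bigl(1 - t/(z+t)\bigr)\dmu$. The constant term is problematic because $\int\dmu$ may diverge. I would therefore work with the kernel $k_t(z)=z/(z+t) = 1 - t\,r_t(z)$ directly. Since $m\ge1$, constants are reproduced exactly by interpolation. So the interpolation error of $k_t$ equals $-t$ times that of $r_t$:
\[
k_t - q^{(t)} = (-1)^{m+1}\pi_m(z)\,\frac{t}{(z+t)\prod_j(\theta_j+t)}.
\]
Linearity, justified as before because $k_t(\theta_j)$ is $\mu$-integrable (it is at most $\min(1,\theta_j/t)$ times a constant), then yields $e = (-1)^{m+1}\pi_m\varsigma_m$ with $\mathrm{d}\varrho_m = t\,\mathrm{d}\mu/\prod_j(\theta_j+t)$. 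This measure is finite-weighted, since for $m\ge1$ we have $t/\prod_j(\theta_j+t)\le 1$ and it is $O(1/(1+t))$ at infinity.

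Finally, for $z>0$ the integrand $1/(z+t)$ is strictly positive on $[0,\infty)$ and $\varrho_m$ is a positive measure. Hence $\varsigma_m(z)\ge0$, and $\varsigma_m(z)>0$ whenever $\varrho_m\neq0$. The main obstacle is exchanging interpolation with the integral over $t$, particularly in case 2 with a possibly non-finite $\mu$. Using the kernel $k_t$ and the Lagrange representation at the finitely many positive nodes resolves this cleanly.
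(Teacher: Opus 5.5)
Your proposal is correct and follows essentially the paper's route: derive the interpolation-error identity for the kernel at fixed $t$, then integrate against $\mu$. The differences are that you prove case 1 directly instead of citing Frommer--G\"uttel--Schweitzer, and you get the case-2 kernel identity from $z/(z+t) = 1 - t/(z+t)$ plus exactness on constants rather than the paper's vanishing-numerator argument, which is the same computation. One harmless slip: $t/\prod_j(\theta_j+t)\le 1$ can fail for $m\ge 2$ when the Ritz values are small (e.g.\ $\theta_1=0.01$, $\theta_2=0.02$, $t=0.01$), and for $m=1$ the ratio tends to $1$ rather than being $O(1/(1+t))$, yet it is bounded by a constant depending on the $\theta_j$, so $\mathrm{d}\varrho_m\le C\,\mathrm{d}\mu$, which is all you need for $\varsigma_m(z)<\infty$ and is exactly the bound the paper uses.
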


\begin{proof}
The result for Stieltjes $f$ is \cite[(3.9)]{frommer_guttel_schweitzer_14}. Note that the authors of \cite{frommer_guttel_schweitzer_14} equivalently define Stieltjes functions as $f(z) = \int_{-\infty}^0 \frac{1}{t-z} \dmu$. The extra $(-1)^m$ comes from a change of variables $t \mapsto -t$ to convert their definition to \cref{eq:stieltjes}.

Now, suppose $f(z)/z$ is a Stieltjes function with representing measure $\mu$. We proceed similarly to \cite{frommer_guttel_schweitzer_14}.
Fix $t \geq 0$ and let $q^{(t)} \in \Pol_{m-1}$ interpolate $z \mapsto z/(z+t)$ at $\theta_1, \ldots, \theta_m$.
The numerator of the rational function $z/(z+t) - q^{(t)}(z) = (z - (z+t)q^{(t)}(z))/(z+t)$ has degree at most $m$ and vanishes at the simple nodes $\theta_1, \ldots, \theta_m$, so it equals $c_t \pi_m(z)$; evaluating the numerator at $z = -t$ yields $c_t \pi_m(-t) = -t$, and hence
\begin{equation*}
e^{(t)}(z):= \frac{z}{z+t} - q^{(t)}(z) = (-1)^{m+1} \frac{\pi_m(z)}{z+t} \frac{t}{\prod_j (\theta_j + t)}.
\end{equation*}
Integrating we see
\[
f - q_{m-1} = \int_0^\infty e^{(t)}(z) \dmu = (-1)^{m+1} \pi_m \varsigma_m,
\]
where we have used that $\int q^{(t)}(z)\dmu \in \Pi_{m-1}$ and that the above integral vanishes at $\theta_1, \ldots, \theta_m$.
Since $\theta_i > \lmin$, $t /((z+t)\prod_j (\theta_j+t)) \leq C /(z+t)$, so the integral exists and is positive for any $z\geq 0$.
\end{proof}

Our proof will also make repeated use of the fact that, since $m < M$, we have
\begin{equation}\label{eq:orthogonality}
    \vec{b}^* \pi_m(\vec{A})\, p(\vec{A})\vec{b} = 0 \text{  for all } p \in \Pol_{m-1},
\end{equation}
where $\pi_m$ is as in \cref{eq:pim};
see \cite[\S2,\S4]{golub_meurant_09}.\footnote{This is usually stated in terms of inner products with respect to the spectral measure $\mu_{\vec{b}} = \sum_{i=1}^n |\vec{w}_i^* \vec{b}|^2 \delta_{\lambda_i}$, where $\delta_{\lambda}$ denotes the Dirac mass centered at $\lambda$. Noting that $\int p g \mathrm{d} \mu_{\vec{b}} = \vec{b}^* p(\vec{A}) g(\vec{A}) \vec{b}$ and and that $\pi_m$ is the degree-$m$ monic orthogonal polynomial of $\mu_{\vec{b}}$, and hence orthogonal to all lower-degree polynomials, yields \cref{eq:orthogonality}.\label{footnote:measure}} We also note that $\pi_m$ is the unique monic polynomial of degree at most $m$ satisfying \eqref{eq:orthogonality}.

With these results at hand, we are ready state the following theorem, which will lead to \Cref{thm:main_intro} as a corollary.

\begin{theorem}\label{thm:upper}
Let $\vec{A}$ be Hermitian positive definite, let $f\in\mathcal{S}$, and let $\nu$ be positive on the spectrum of $\vec{A}$. 
Put
\[
R := \frac{\max_{i=1,\ldots, n}\nu(\lambda_i)\varsigma_m(\lambda_i)}{\min_{i=1,\ldots, n}\nu(\lambda_i)\varsigma_m(\lambda_i)} \ \ge 1
\]
with $\varsigma_m$ as in \cref{lem:factorisation}.
Then with $\vec{f}_m$ as in \cref{eq:Lanczos-def} we have
\begin{equation}\label{eq:upper}
\|f(\vec{A})\vec{b}-\vec{f}_m\|_\nu \leq  \frac12\left(\sqrt{R}+\frac{1}{\sqrt R}\right)\min_{\vec{x}\in\Kry_m}\|f(\vec{A})\vec{b}-\vec{x}\|_\nu .
\end{equation}
\end{theorem}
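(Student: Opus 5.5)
The plan is to follow the strategy laid out in \cref{eq:intropyth}--\cref{eq:desiderata}. I will show that $\|\vec{P}(f(\vec{A})\vec{b}-\vec{f}_m)\|_\nu \le \tau\,\|f(\vec{A})\vec{b}-\vec{f}_m\|_\nu$ with $\tau = (R-1)/(R+1)$. Since $1/\sqrt{1-\tau^2} = (R+1)/(2\sqrt R) = \tfrac12(\sqrt R + 1/\sqrt R)$, \cref{eq:upper} then follows directly from \cref{eq:desiderata}.

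\emph{Degenerate case.} If $\varrho_m = 0$, then $\varsigma_m\equiv 0$ and by \cref{lem:factorisation} the error vanishes, so there is nothing to prove. Likewise, if $m\ge M$ the method is exact. So assume $m<M$ and $\varsigma_m>0$ on $(0,\infty)$. Then $w := \nu\varsigma_m$ is positive on the spectrum, with values in $[a, Ra]$ where $a := \min_i w(\lambda_i)$.

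\emph{Representing the error.} By \cref{eq:interpolation} and \cref{lem:factorisation}, the error is $\vec{r} := f(\vec{A})\vec{b}-\vec{f}_m = \pm\pi_m(\vec{A})\varsigma_m(\vec{A})\vec{b}$. By duality,
\[
\|\vec{P}\vec{r}\|_\nu = \sup\{ |\langle \vec{y},\vec{r}\rangle_\nu| : \vec{y}\in\Kry_m,\ \|\vec{y}\|_\nu = 1\}.
\]
Write $\vec{y} = p(\vec{A})\vec{b}$ with $p\in\Pol_{m-1}$ and work with the spectral measure $\mu_{\vec b}$ of footnote~\ref{footnote:measure}. Then
\[
\langle \vec{y},\vec{r}\rangle_\nu = \pm\int \bar p\,\pi_m\, w \,\mathrm{d}\mu_{\vec b}.
\]

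\emph{Key step: subtract a free constant.} By the orthogonality \cref{eq:orthogonality}, $\int \bar p\,\pi_m\,\mathrm{d}\mu_{\vec b}=0$. Hence for any constant $c\in\R$,
\[
\langle \vec{y},\vec{r}\rangle_\nu = \pm\int \bar p\,\pi_m\,(w-c)\,\mathrm{d}\mu_{\vec b}.
\]
Now factor the integrand as
\[
\bar p\,\pi_m(w-c) = \big(\bar p\sqrt{\nu}\big)\cdot\big(\pi_m\varsigma_m\sqrt{\nu}\big)\cdot\big(1-c/w\big),
\]
which is valid because $w=\nu\varsigma_m$. Applying Cauchy--Schwarz in $L^2(\mu_{\vec b})$ gives
\[
|\langle \vec{y},\vec{r}\rangle_\nu| \le \|\vec y\|_\nu\,\|\vec r\|_\nu\,\max_i |1-c/w(\lambda_i)|.
\]

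\emph{Choosing $c$.} It remains to minimize $\max_{w\in[a,Ra]}|1-c/w|$ over $c$. The maximum is attained at an endpoint, and it is minimized by equioscillation: $c/a-1 = 1-c/(Ra)$, which gives $c = 2Ra/(R+1)$. The resulting value is $(R-1)/(R+1)=\tau$, and this completes the bound.

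\emph{Main obstacle.} The step I expect to be the crux is recognising that orthogonality lets one insert the free constant $c$, and arranging the factorization so that Cauchy--Schwarz produces exactly $\|\vec y\|_\nu\|\vec r\|_\nu$ with only the bounded multiplier $1-c/w$ left over. After that, the remaining steps are an elementary one-dimensional optimization and the Pythagorean identity \cref{eq:intropyth}.
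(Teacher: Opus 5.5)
Your proposal is correct and follows the paper's proof essentially step for step: orthogonality of $\pi_m$ to $\Pol_{m-1}$ to insert a free constant, the factorization through $1-c/(\nu\varsigma_m)$, Cauchy--Schwarz, the harmonic-mean choice $c = 2Ra/(R+1)$ giving $\tau=(R-1)/(R+1)$, and the Pythagorean identity. The differences are cosmetic. You work in $L^2(\mu_{\vec{b}})$ rather than with Euclidean inner products of vectors, and you explicitly dispose of the degenerate case $\varrho_m=0$ (where $R$ is undefined but the error vanishes), which the paper leaves implicit.
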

\begin{proof}
    Define $\vec{e}:=f(\vec{A})\vec{b} - \vec{f}_m = (f(\vec{A}) - q_{m-1}(\vec{A})) \vec{b} = e(\vec{A})\vec{b}$, where $q_{m-1}$ is the interpolating polynomial \cref{eq:interpolation} and $e = f-q_{m-1}$ the error function. 
    We will begin with providing an upper bound $\tau \in [0,1)$ for $\|\vec{P}\vec{e}\|_{\nu}/\|\vec{e}\|_{\nu}$, where $\vec{P}$ is the $\langle \cdot, \cdot\rangle_{\nu}$ orthogonal projection onto $\Kry_m$.
    By the Pythagorean theorem \cref{eq:intropyth}, this allows us to obtain \cref{eq:desiderata}.

    Let $g := \nu\varsigma_m > 0$, let $p \in \Pol_{m-1}$ with $\|p(\vec{A})\vec{b}\|_\nu=1$ and let $\hat g > 0$ be any constant.
Using \cref{lem:factorisation} and then that $\pi_m$ is orthogonal to $\hat{g}p\in\Pi_{m-1}$ (i.e. \cref{eq:orthogonality}),
\[
\left|\ip{\vec{e}}{p(\vec{A})\vec{b}}_\nu\right|
= \left| \vec{b}^* \pi_m(\vec{A})\varsigma_m(\vec{A})\nu(\vec{A}) p(\vec{A})\vec{b}\right|
= \left| \vec{b}^* \pi_m(\vec{A})\big(g(\vec{A})-\hat g\vec{I}\big) p(\vec{A})\vec{b}\right|.
\]
Factoring the last expression as
\[
\vec{b}^* \pi_m(\vec{A})\big(g(\vec{A})-\hat g\vec{I}\big) p(\vec{A})\vec{b}
= \big(\pi_m(\vec{A})\varsigma_m(\vec{A})\nu(\vec{A})^{1/2}\vec{b}\big)^* \big(\vec{I}-\hat g\, g(\vec{A})^{-1}\big)\nu(\vec{A})^{1/2} p(\vec{A})\vec{b}
\]
and applying the Cauchy--Schwarz inequality in the Euclidean inner product gives
\begin{align*}
\big|\ip{\vec{e}}{p(\vec{A})\vec{b}}_\nu\big|
&\leq  \big\|\pi_m(\vec{A})\varsigma_m(\vec{A})\vec{b}\big\|_\nu \cdot \big\| \big(\vec{I} - \hat g\, g(\vec{A})^{-1}\big) \nu(\vec{A})^{1/2} p(\vec{A})\vec{b} \big\|_2
\\&\leq  \|\vec{e}\|_\nu \cdot \big\|\vec{I} - \hat g\, g(\vec{A})^{-1}\big\|_2 \, \big\|p(\vec{A})\vec{b}\big\|_\nu
\\&=  \|\vec{e}\|_\nu\cdot\max_{i=1,\ldots, n}\Big|1-\frac{\hat g}{g(\lambda_i)}\Big|,
\end{align*}
where we have used that $\vec{e} = \pm\pi_m(\vec{A})\varsigma_m(\vec{A})\vec{b}$, that $\|\nu(\vec{A})^{1/2}\vec{x}\|_2 = \|\vec{x}\|_\nu$, and that $\|p(\vec{A})\vec{b}\|_\nu = 1$.
Now, since $\vec{P}\vec{e}\in\Kry_m$ and every element of $\Kry_m$ has the form $p(\vec{A})\vec{b}$ with $p\in\Pi_{m-1}$,
\begin{equation*}
    \| \vec{P}\vec{e} \|_\nu
= \sup_{\substack{p\in\Pi_{m-1}\\\|p(\vec{A})\vec{b}\|_\nu=1}} \big|\ip{\vec{e}}{p(\vec{A})\vec{b}}_\nu\big|
\leq 
\|\vec{e}\|_\nu\cdot\max_{i=1,\ldots, n}\Big|1-\frac{\hat g}{g(\lambda_i)}\Big|,
\end{equation*}
Let $g_{\min} := \min\limits_{i=1,\ldots,n} g(\lambda_i)$ and $g_{\max}:=\max\limits_{i=1,\ldots,n} g(\lambda_i)$. Choosing $\hat g = 2g_{\min} g_{\max}/(g_{\min}+g_{\max})$ to be, the harmonic mean of the extreme values $g_{\min}, g_{\max}$ of $g$ on the spectrum of $\vec{A}$, yields
\begin{equation*}
\tau:=\max_{i=1,\ldots, n}\Big|1-\frac{\hat g}{g(\lambda_i)}\Big|
= \max_{i=1,\ldots, n}\Big|1-\frac{2g_{\min}g_{\max}}{(g_{\min}+g_{\max})g(\lambda_i)}\Big|
= \frac{g_{\max}-g_{\min}}{g_{\max}+g_{\min}}
= \frac{R-1}{R+1},
\end{equation*}
from which we obtain the bound 
\[
\frac{1}{\sqrt{1-\tau^2}} =  \frac{R+1}{2\sqrt R} = \frac12\left(\sqrt R + \frac{1}{\sqrt R}\right). 
\]
This gives the result.
\end{proof}

In the following, we will show that $R \leq \kappa^{\max\{\alpha,1-\alpha\}}$. 
For this, we need the following lemma. 

\begin{lemma}\label[lemma]{lem:oscillation}
Let $\varsigma(z) = \int_0^\infty (z+t)^{-1}\mathrm{d}\varrho(t)$ with $\varrho$ positive, and let $\alpha \in \R$.
The function $g_\alpha(z) := z^{\alpha}\varsigma(z)$ satisfies
\begin{equation*}
\frac{\max_{i=1,\ldots, n} g_\alpha(\lambda_i)}{\min_{i=1,\ldots, n} g_\alpha(\lambda_i)} \leq \kappa^{E}, 
\qquad
E:=\max\{\alpha,1-\alpha\}.
\end{equation*}
\end{lemma}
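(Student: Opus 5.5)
Since all eigenvalues lie in $[\lmin,\lmax]$ with $\lmax/\lmin=\kappa$, it suffices to show that for any $0<x\le y$,
\[
\kappa^{-E}\le\Big(\tfrac{x}{y}\Big)^{E}\le \frac{g_\alpha(y)}{g_\alpha(x)} \le \Big(\tfrac{y}{x}\Big)^{E}\le\kappa^{E}.
\]
Applying this to $x,y\in\{\lambda_i\}$ chosen as the minimizer and maximizer of $g_\alpha$ (in whichever order) gives the claim. So the plan is to establish two-sided monotonicity bounds for the Stieltjes function $\varsigma$ and then multiply by $z^\alpha$.

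\textbf{Step 1 (bounds for $\varsigma$).} For each $t\ge0$ and $0<x\le y$,
\[
\frac{y+t}{x+t}\le \frac{y}{x}.
\]
This is equivalent to $x(y+t)\le y(x+t)$, i.e.\ $tx\le ty$. Hence
\[
\frac{1}{y+t}\le\frac{1}{x+t}\le \frac{y}{x}\cdot\frac{1}{y+t}.
\]
Integrating against the positive measure $\varrho$ gives
\[
\varsigma(y)\le\varsigma(x)\le \frac{y}{x}\,\varsigma(y),
\qquad\text{i.e.}\qquad
\frac{x}{y}\le \frac{\varsigma(y)}{\varsigma(x)}\le 1 .
\]
If $\varrho=0$, then $g_\alpha\equiv0$ and the ratio is interpreted as $1$ by convention, so we may assume $\varsigma>0$.

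\textbf{Step 2 (multiply by $z^\alpha$).} Writing $r=y/x\ge1$, Step 1 gives
\[
\frac{g_\alpha(y)}{g_\alpha(x)}=r^{\alpha}\,\frac{\varsigma(y)}{\varsigma(x)}\in\big[r^{\alpha-1},\,r^{\alpha}\big].
\]
The upper endpoint satisfies $r^\alpha\le r^{E}$ because $\alpha\le E$. The lower endpoint satisfies $r^{\alpha-1}\ge r^{-E}$ because $1-\alpha\le E$. Thus the ratio lies in $[r^{-E},r^{E}]$, and since $r\le\kappa$ it lies in $[\kappa^{-E},\kappa^{E}]$. Here we use $E\ge\tfrac12>0$, so $r\mapsto r^{\pm E}$ is monotone in the direction needed.

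\textbf{Step 3 (conclusion).} Take $x$ and $y$ to be the eigenvalues at which $g_\alpha$ attains its max and min, ordered so that $x\le y$. The ratio $\max g_\alpha/\min g_\alpha$ is either $g_\alpha(y)/g_\alpha(x)$ or its reciprocal, and in both cases Step 2 bounds it by $\kappa^E$.

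The argument is elementary. The only point needing care is the pointwise inequality in Step 1 that yields the factor $y/x$. This is exactly where the exponent $1-\alpha$ (rather than something larger) arises, and it is what makes the resulting bound sharp.
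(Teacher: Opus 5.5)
Your proof is correct and is essentially the paper's argument. The paper likewise uses that $\varsigma$ is non-increasing and that $z\varsigma(z)$ is non-decreasing (the latter via monotonicity of $z/(z+t)$, which is your pointwise inequality $\frac{y+t}{x+t}\le\frac{y}{x}$). From these it sandwiches $g_\alpha(y)/g_\alpha(x)$ between $r^{\alpha-1}$ and $r^{\alpha}$ and bounds both by $\kappa^{\pm E}$, exactly as you do.
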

\begin{proof}
By positivity of $\varrho$, that $z \mapsto z/(z+t)$ is non-decreasing, and $\varsigma$ is non-increasing we immediately obtain $\varsigma(z_2) \leq \varsigma(z_1)$ and $z_1\varsigma(z_1) \leq z_2 \varsigma(z_2)$.
Hence, for any $\lmin \le z_1 \le z_2 \le \lmax$ we have
\begin{equation*}
    \frac{z_1^{\alpha}}{z_2^{\alpha}} \leq \frac{z_1^{\alpha}}{z_2^{\alpha}} \cdot \frac{\varsigma(z_1)}{\varsigma(z_2)} = \frac{g_{\alpha}(z_1)}{g_{\alpha}(z_2)} \leq \frac{z_1^{\alpha-1}}{z_2^{\alpha-1}}.
\end{equation*}
Using $(z_2/z_1)^{\max\{\alpha,1-\alpha\}} \leq \kappa^{\max\{\alpha,1-\alpha\}}$, any two values of $g_\alpha$ on $[\lmin,\lmax]$ differ by at most a factor $\kappa^{\max\{\alpha,1-\alpha\}}$, which is the result.
\end{proof}

With \Cref{thm:upper} and \Cref{lem:oscillation} at hand, we are ready to prove \Cref{thm:main_intro}.

\begin{proof}[Proof of \cref{thm:main_intro}]
Combine \cref{thm:upper} with \cref{lem:oscillation} and the monotonicity of $R \mapsto \tfrac12(\sqrt R + R^{-1/2})$ on $[1,\infty)$.
\end{proof}

\section{The lower bound}
\label{sec:lower}

We now turn our attention to \cref{thm:lower_m}.
Note that our proof of \cref{thm:main_intro} only has two sources of slackness: \cref{lem:oscillation} and the use of Cauchy--Schwarz in the proof of \cref{thm:upper}.
Our lower bound constructs a ``hard instance'' for which both inequalities are simultaneously equality.
In particular, by using the function $f_s(z) = 1/(z+s)$, for some $s\geq 0$ tuned to $\alpha$, we ensure that \cref{lem:oscillation} holds with equality, and by carefully choosing $\vec{b}$ we ensure that \cref{thm:upper} holds with equality.

By choosing $\vec{b}$ to have nonzero inner product with the eigenvectors corresponding to the largest and smallest eigenvalues, as well as with $m-1$ other eigenvectors, and to be orthogonal to all the remaining eigenvectors, it suffices to consider the case $n = m+1$.
Fix a Hermitian positive definite matrix $\vec{A} \in \mathbb{C}^{m+1 \times m+1}$ with simple eigenvalues, a vector $\vec{b} \in \mathbb{C}^{m+1}$, and a Stieltjes function $f_s(z) = 1/(z+s)$, for some $s \geq 0$. 
The proof consists of three steps. In the first two steps we derive \emph{exact} expressions for the $m$-th Lanczos-FA error $\|f_s(\vec{A})\vec{b} - \vec{f}_m\|_{\vec{A}^{\alpha}}$ and the optimal approximation error $\min_{\vec{x}\in\Kry_m}\|f_s(\vec{A})\vec{b}-\vec{x}\|_{\vec{A}^\alpha}$. 
In the third step we show that their ratio can be written as
\begin{equation}\label{eq:cauchy}
    \frac{\|f_s(\vec{A})\vec{b}-\vec{f}_m\|_{\vec{A}^\alpha}}{\min_{\vec{x}\in\Kry_m}\|f_s(\vec{A})\vec{b}-\vec{x}\|_{\vec{A}^\alpha}} = \frac{\|\vec{v}\|_2 \|\vec{u}\|_2}{|\vec{v}^* \vec{u}|},
\end{equation}
where $\vec{v} = \vec{v}(\vec{A},\vec{b},s)$ and $\vec{u} = \vec{u}(\vec{A},\vec{b},s)$ are vectors of length $m+1$ that depend on $\vec{A}, \vec{b},$ and $s$. 
We will then tune $\vec{b}$ and $s$ with respect to $\vec{A}$ so that \cref{thm:lower_m} holds.
The parameter $s$ is only needed when $\alpha > 1/2$. The reader interested in the Euclidean norm ($\alpha = 0$) may assume that $s = 0$ and the only tunable choice is the vector $\vec{b}$. 

Let $\vec{A} = \sum_{i=1}^{m+1}\lambda_i \vec{w}_i \vec{w}_i^*$ be its eigendecomposition with simple eigenvalues $\lambda_1 > \cdots > \lambda_{m+1}$, eigenvectors $\vec{w}_1,\ldots,\vec{w}_{m+1}$, and condition number $\kappa = \lambda_1/\lambda_{m+1}$. Define $\omega_i := \prod_{k \neq i}(\lambda_i - \lambda_k)$ and the $m$-th divided difference for a function $f$:
\begin{equation*}
    f[\lambda_1, \dots, \lambda_{m+1}] := \sum_{i=1}^{m+1} \frac{f(\lambda_i)}{\omega_i}.
\end{equation*}
We will use the fact that for any polynomial $p \in \Pi_{m}$, $p[\lambda_1,\dots,\lambda_{m+1}]$ is the coefficient on the degree-$m$ term of $p$'s monomial expansion \cite[(1.10)]{higham_08}. To derive an expression for the $m$-th Lanczos-FA error $\|f_s(\vec{A})\vec{b} - \vec{f}_m\|_{\vec{A}^{\alpha}}$ we first note that \emph{any} vector $\vec{b}$ satisfying $\vec{u}_i^*\vec{b} \neq0$ for all $i$ can be written as
\begin{equation}\label{eq:b}
    \vec{b} = \sum\limits_{i=1}^{m+1} \widehat{b}_i\vec{w}_i , \quad |\widehat{b}_i|^2 := \frac{1}{g_i \omega_i^2} > 0,
\end{equation}
for some $g_1,\ldots,g_{m+1} > 0$. 
We then observe that \cref{lem:factorisation} and \cref{eq:interpolation} allow us to compute $\|f_s(\vec{A})\vec{b} - \vec{f}_m\|_{\vec{A}^{\alpha}}$ if we know the values of $\pi_m$ on the eigenvalues of $\vec{A}$. 
The reason for expressing the vector $\vec{b}$ as in \cref{eq:b} is because this allows us to express $\pi_m(\lambda_i)$ in terms of $g_i$ and $\omega_i$, as shown in the following lemma.

\begin{remark}
    Our proof is similar in spirit to the approach used by Greenbaum in \cite{greenbaum_79} to prove lower bounds for Conjugate Gradient in the $\vec{A}$-norm; see also \cite[Appendix C]{amsel_chen_greenbaum_musco_musco_24}.
\end{remark}

\begin{lemma}\label[lemma]{lem:prescribe}
For a vector $\vec{b} \in \mathbb{C}^{m + 1}$ and $g_1,\ldots,g_{m+1}$ as defined in \cref{eq:b}, define $G :=  \sum_i g_i$. Then $\dim(\Kry_m) = m$ and the Ritz polynomial $\pi_m(z) = \prod_i (z-\theta_i)$ of $\vec{b}$ at step $m$ satisfies $\pi_m(\lambda_i) = g_i\omega_i/G \neq0$. 
\end{lemma}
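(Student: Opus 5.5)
Since $\vec{b}$ has a nonzero component along every eigenvector of $\vec{A}$, and $\vec{A}$ has $m+1$ distinct eigenvalues, the Krylov subspace $\Kry_{m+1}$ has dimension $m+1$. Hence $\dim(\Kry_m)=m$ and $m<M=m+1$. So the orthogonality characterization \cref{eq:orthogonality} applies: $\pi_m$ is the unique monic polynomial of degree $m$ with $\vec{b}^*\pi_m(\vec{A})p(\vec{A})\vec{b}=0$ for all $p\in\Pol_{m-1}$.

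The plan is to guess the candidate $\tilde\pi(z)$ defined by interpolation, and verify that it has these two properties. Let $\tilde\pi\in\Pol_m$ be the unique polynomial with $\tilde\pi(\lambda_i)=g_i\omega_i/G$ for $i=1,\dots,m+1$. This is well defined, since there are $m+1$ distinct nodes and the degree is at most $m$.

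\textbf{Monic.} The leading coefficient of $\tilde\pi$ equals the divided difference
\[
\tilde\pi[\lambda_1,\dots,\lambda_{m+1}]=\sum_i \frac{\tilde\pi(\lambda_i)}{\omega_i}=\sum_i \frac{g_i}{G}=1 .
\]
So $\tilde\pi$ is monic of degree exactly $m$.

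\textbf{Orthogonal.} For $p\in\Pol_{m-1}$,
\[
\vec{b}^*\tilde\pi(\vec{A})p(\vec{A})\vec{b}=\sum_i |\widehat b_i|^2\tilde\pi(\lambda_i)p(\lambda_i)=\sum_i \frac{1}{g_i\omega_i^2}\cdot\frac{g_i\omega_i}{G}\,p(\lambda_i)=\frac{1}{G}\sum_i\frac{p(\lambda_i)}{\omega_i}=\frac{1}{G}\,p[\lambda_1,\dots,\lambda_{m+1}].
\]
This vanishes because $p$ has degree at most $m-1$, so its degree-$m$ coefficient is zero.

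By uniqueness, $\pi_m=\tilde\pi$, and therefore $\pi_m(\lambda_i)=g_i\omega_i/G$. This is nonzero because $g_i>0$ and the $\lambda_i$ are distinct, so $\omega_i\neq0$.

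There is no real obstacle. The only care needed is to confirm that the uniqueness statement after \cref{eq:orthogonality} applies, which requires $m<M$ as established at the start, and to use the divided-difference fact in both the monicity and the orthogonality step.
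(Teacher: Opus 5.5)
Your proposal is correct and matches the paper's proof essentially step for step. You interpolate the prescribed values $g_i\omega_i/G$, use the divided-difference identity to show the interpolant is monic and orthogonal to $\Pol_{m-1}$ in the $\mu_{\vec{b}}$-weighted inner product, and conclude $\pi_m$ equals it by the uniqueness statement after \cref{eq:orthogonality}; you also make explicit that $M=m+1>m$, which is what lets that characterization be applied.
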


\begin{proof}
The vector $\vec{b}$ has non-zero inner product with all eigenvectors of $\vec{A}$. Hence, the grade of $\vec{b}$ is $m+1$ and by \cite[Proposition 6.2]{saaditerative} $\dim(\Kry_m) = m$. 
The function $\pi$ defined by $\pi(\lambda_i) := g_i\omega_i/G$ extends uniquely to a polynomial of degree at most $m$; the coefficient on the degree-$m$ term is $\pi[\lambda_1,\dots,\lambda_{m+1}] = \sum_i g_i/G = 1$, so $\pi$ is monic of degree exactly $m$.
Moreover, for every $p \in \Pol_{m-1}$,
\[
\sum_{i=1}^{m+1} |\widehat{b}_i|^2\, \pi(\lambda_i)\, p(\lambda_i)= \frac{1}{G} \sum_{i=1}^{m+1} \frac{p(\lambda_i)}{\omega_i} = \frac{1}{G} p[\lambda_1,\dots,\lambda_{m+1}] = 0.
\]
Thus, $\pi$ is the monic degree-$m$ orthogonal polynomial of $\mu_{\vec{b}}$, i.e., $\pi = \pi_m$ (see \cref{eq:orthogonality} and \cref{footnote:measure}). 
\end{proof}
In the second step of the proof, we need to derive an expression for the optimal approximation error $\min_{\vec{x}\in\Kry_m}\|f_s(\vec{A})\vec{b}-\vec{x}\|_{\vec{A}^\alpha}$. 
In particular, we will prove that this error can be expressed as a divided difference of the function $e$ from \cref{lem:factorisation}. 
The following lemma will be essential for that purpose.

\begin{lemma}\label[lemma]{lem:divdiff}
Let $u_1, \dots, u_{m+1} > 0$ be weights on the nodes $\lambda_1,\ldots, \lambda_{m+1}$ and let $e$ be any function on the nodes.
Then
\[
\min_{p\in\Pol_{m-1}} \sum_{i=1}^{m+1} u_i \big|e(\lambda_i)-p(\lambda_i)\big|^2 = \frac{|e[\lambda_1,\dots,\lambda_{m+1}]|^2}{\sum_{i=1}^{m+1} (u_i\omega_i^2)^{-1}} .
\]
\end{lemma}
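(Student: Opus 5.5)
Since $\Pol_{m-1}$ has codimension one in $\Pol_m$, and on the $m+1$ nodes $\lambda_1,\dots,\lambda_{m+1}$ every function coincides with a unique polynomial of degree at most $m$, I would recast the problem as a weighted least-squares problem in $\C^{m+1}$. I will identify a function $h$ on the nodes with the vector $(h(\lambda_i))_i$ and use the weighted inner product $\langle h,k\rangle_u := \sum_i u_i \overline{h(\lambda_i)}k(\lambda_i)$. The subspace $\{(p(\lambda_i))_i : p\in\Pol_{m-1}\}$ has dimension $m$, so its $\langle\cdot,\cdot\rangle_u$-orthogonal complement is one-dimensional. The minimum in the statement is then the squared distance from $e$ to this subspace, which equals $|\langle \phi, e\rangle_u|^2/\langle\phi,\phi\rangle_u$ for any nonzero $\phi$ spanning the complement.

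The key step is to identify $\phi$ explicitly. I would use the divided-difference fact quoted before \cref{lem:prescribe}: for every $p\in\Pol_{m-1}$,
\[
p[\lambda_1,\dots,\lambda_{m+1}] = \sum_i p(\lambda_i)/\omega_i = 0 .
\]
Writing this sum as $\sum_i u_i \,\overline{\phi(\lambda_i)}\, p(\lambda_i)$ forces the choice $\phi(\lambda_i) := 1/(u_i\omega_i)$, which is real since $\omega_i$ and $u_i$ are real and $u_i>0$. With this choice $\langle \phi, p\rangle_u = 0$ for all $p\in\Pol_{m-1}$, and $\phi\neq 0$, so $\phi$ spans the orthogonal complement.

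It remains to compute the two inner products:
\[
\langle\phi,e\rangle_u = \sum_i e(\lambda_i)/\omega_i = e[\lambda_1,\dots,\lambda_{m+1}],
\qquad
\langle\phi,\phi\rangle_u = \sum_i u_i/(u_i\omega_i)^2 = \sum_i (u_i\omega_i^2)^{-1}.
\]
Substituting these into the projection formula gives exactly the claimed expression.

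I do not expect a real obstacle; the only point needing care is justifying the distance-to-a-hyperplane formula. I would argue it via Pythagoras: write $e = p^\star + c\,\phi$ with $p^\star$ in the subspace. Taking the inner product with $\phi$ gives $c = \langle\phi,e\rangle_u/\langle\phi,\phi\rangle_u$, and the minimal residual norm squared is then $|c|^2\langle\phi,\phi\rangle_u$. This requires positivity of $u_i$ so that $\langle\cdot,\cdot\rangle_u$ is genuinely an inner product, and distinctness of the nodes so that $\omega_i\neq0$ and the evaluation map on $\Pol_{m-1}$ is injective.
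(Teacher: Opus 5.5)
Your proposal is correct and follows essentially the same route as the paper. Both arguments identify the one-dimensional $u$-weighted orthogonal complement of $\Pol_{m-1}$ on the nodes as spanned by $\lambda_i \mapsto (u_i\omega_i)^{-1}$, using the vanishing of $m$-th divided differences of polynomials in $\Pol_{m-1}$, and then evaluate the squared projection $|\langle \phi, e\rangle_u|^2/\langle \phi,\phi\rangle_u$. Your explicit Pythagoras derivation of the distance formula, and your remarks on positivity of the $u_i$ and distinctness of the nodes, spell out steps the paper leaves implicit.
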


\begin{proof}
Functions on $m+1$ nodes form an $(m+1)$-dimensional space in which $\Pol_{m-1}$ has dimension $m$, so the orthogonal complement of $\Pol_{m-1}$ in the $u$-weighted inner product is spanned by a single function $h$.
The function $h(\lambda_i) := (u_i\omega_i)^{-1}$ lies in this complement, since $\sum_i u_i h(\lambda_i) p(\lambda_i) = p[\lambda_1,\dots,\lambda_{m+1}] = 0$ for all $p \in \Pol_{m-1}$.
The minimum error is the squared projection onto $h$, namely $|\langle e, h\rangle_u|^2/\|h\|_u^2$, where $\langle e, h\rangle_u = \sum_i e(\lambda_i)/\omega_i = e[\lambda_1,\dots,\lambda_{m+1}]$ while $\|h\|_u^2 = \sum_i (u_i\omega_i^2)^{-1}$.
\end{proof}
With \cref{lem:prescribe} and \cref{lem:divdiff} at hand, we are ready to prove \cref{thm:lower_m}.

\begin{proof}[Proof of \cref{thm:lower_m}]
 As argued above, it suffices to consider the case when $n=m+1$. 
 Let $s \geq 0$, to be chosen at the end, and let $ f_s(z) := (z+s)^{-1} \in \mathcal{S}$.
Write $\nu_i := \lambda_i^\alpha$ and $\varphi_{i,s} := \lambda_i^\alpha/(\lambda_i+s)$.
Given $h_1, \dots, h_{m+1} > 0$ to be chosen at the end, set $g_i := h_i\nu_i$ and let $\vec{b}$ be as in \cref{eq:b}.

\emph{Step 1: An expression for the Lanczos-FA error.} We begin with the Lanczos-FA error.
By \cref{eq:interpolation} and \cref{lem:factorisation} with $\mu = \delta_s$ (the Dirac point mass at $s$), the error function is
\[
e(z) = f_s(z) - q_{m-1}(z) = c\, \pi_m(z)(z+s)^{-1}
,\qquad 
c := \frac{(-1)^m}{\prod_j (\theta_j+s)}.
\]
Now, by \cref{lem:prescribe} and the fact $G = \sum_j g_j = \sum_j h_j \nu_j$,
\[
e(\lambda_i) 
= c \frac{\pi_m(\lambda_i)}{\lambda_i+s}
= \frac{c}{\sum_j h_j\nu_j} \frac{g_i\omega_i}{\lambda_i+s}
= \frac{c}{\sum_j h_j\nu_j} \frac{h_i \nu_i \omega_i}{\lambda_i+s}
= \frac{c}{\sum_j h_j\nu_j} h_i \omega_i \varphi_{i,s}.
\]
If
\[
u_i := \nu_i |\widehat{b}_i|^2 
= \nu_i (g_i \omega_i^2)^{-1}
= (h_i\omega_i^2)^{-1},
\]
then
\[
\|f_s(\vec{A})\vec{b}-\vec{f}_m\|_{\vec{A}^\alpha}^2 
= \sum_{i=1}^{m+1} u_i e(\lambda_i)^2 
= \frac{c^2}{\big(\sum_j h_j\nu_j\big)^2}  \sum_{i=1}^{m+1}  h_i\,\varphi_{i,s}^2,
\]
which yields an exact expression for the Lanczos-FA error.

\emph{Step 2: An expression for the optimal error.} We now turn our attention to the optimal error.
First, 
\begin{equation*}
e[\lambda_1,\dots,\lambda_{m+1}] 
= \sum_{i=1}^{m+1} \frac{e(\lambda_i)}{\omega_i} 
= \frac{c}{\sum_j h_j\nu_j} \sum_{i=1}^{m+1} h_i\, \varphi_{i,s}, \quad 
\sum_{i=1}^{m+1} (u_i\omega_i^2)^{-1} 
= \sum_{i=1}^{m+1} h_i.
\end{equation*}
Now, since $q_{m-1}\in\Pi_{m-1}$,
\[
\min_{\vec{x}\in\Kry_m}\|f_s(\vec{A})\vec{b}-\vec{x}\|_{\vec{A}^\alpha}^2
= \min_{p\in\Pi_{m-1}} \sum_{i=1}^{m+1} u_i(f_s(\lambda_i) - p(\lambda_i))^2
= \min_{p\in\Pi_{m-1}} \sum_{i=1}^{m+1} u_i(e(\lambda_i) - p(\lambda_i))^2.
\]
By \cref{lem:divdiff},
\[
\min_{\vec{x}\in\Kry_m}\|f_s(\vec{A})\vec{b}-\vec{x}\|_{\vec{A}^\alpha}^2
= \frac{e[\lambda_1, \ldots,\lambda_{m+1}]^2}{\sum_i (u_i\omega_i^2)^{-1}}
= \frac{c^2}{\left(\sum_i h_i\nu_i\right)^2} \frac{\big(\sum_{i} h_i\, \varphi_{i,s} \big)^2}{\sum_i h_i},
\]
which yields an exact expression for the optimal error. 

\emph{Step 3: Computing the ratio.} Taking the ratio, all constants cancel and we obtain the exact identity
\begin{equation}\label{eq:kantorovich}
\frac{\|f_s(\vec{A})\vec{b}-\vec{f}_m\|_{\vec{A}^\alpha}^2}{\min_{\vec{x}\in\Kry_m}\|f_s(\vec{A})\vec{b}-\vec{x}\|_{\vec{A}^\alpha}^2}
= \frac{\big(\sum_i h_i\,\varphi_{i,s}^2\big)\big(\sum_i h_i\big)}{\big(\sum_i h_i\,\varphi_{i,s}\big)^2} ,
\end{equation}
in which the weights $h_i > 0$ may be prescribed freely.
Consider weights are supported on the two extreme nodes with $h_1 \propto \varphi_{1,s}^{-1}$ and $h_{m+1} \propto \varphi_{m+1,s}^{-1}$, in the right hand side above equals
\begin{equation}\label{eq:cassels_max}
\frac{(\varphi_{1,s} + \varphi_{m+1,s})(\varphi_{1,s}^{-1} + \varphi_{m+1,s}^{-1})}{4} =  \psi(R_s)^2
,\qquad
\psi(R) := \frac{1}{2}\big(R^{1/2} + R^{-1/2}\big),
\quad
R_s := \frac{\varphi_{1,s}}{\varphi_{m+1,s}}.
\end{equation}
Since \cref{lem:prescribe} requires $h_i > 0$ at every node, we take $h_1 = \varphi_{1,s}^{-1}$, $h_{m+1} = \varphi_{m+1,s}^{-1}$, and $h_i = \delta > 0$ at the interior nodes; by continuity of \cref{eq:kantorovich} in the weights, the supremum $\psi(R_s)^2$ is approached as $\delta \to 0$.
Since $R_s = \varphi_{1,s}/\varphi_{m+1,s} = \kappa^{\alpha} \frac{\lambda_{m+1}+s}{\lambda_{1} + s}$, the map $s \mapsto \psi(R_s)$ satisfies $\psi(R_s) = \psi(R_s^{-1})$. Furthermore, $\psi$ is decreasing on $(0,1]$ and increasing on $[1,\infty)$. Hence,  $s \mapsto \psi(R_s)$ reaches its supremum at an endpoint of the range:
\[
\sup_{s \geq 0}\, \psi(R_s) = \max\big\{\psi(\kappa^{1-\alpha}),\, \psi(\kappa^{\alpha})\big\} = \psi(\kappa^E), \quad E = \max\{\alpha,1-\alpha\}.
\]
It is attained at $s = 0$ if $\alpha \leq 1/2$, and is approached as $s \to \infty$ otherwise.
Finally, choose $s \geq 0$ such that  $\psi(R_s) \geq (1-\eta/2)\psi(\kappa^E)$ and then choose $\delta$ sufficiently small that \cref{eq:kantorovich} exceeds $(1-\eta)^2\psi(\kappa^E)^2$. Taking square roots completes the proof.
\end{proof}

\section*{AI statement}

The authors used assistance from AI tools to develop ideas presented in this paper. 
The authors assume responsibility for all content.

\clearpage

\printbibliography

\end{document}